\newcommand{\red}[1]{\textcolor{black}{#1}}
\numberwithin{equation}{section}
\newtheorem{lem}{Lemma}[section]
\newtheorem{thm}{Theorem}[section]
\newtheorem{rem}{Remark}[section]
\begin{document}
\title[Dynamics of species for resource competition]{Dynamics of many species
through competition for resources}
\author{Wenli Cai}
\address{Department of Mathematics, China University of Mining and Technology, Beijing 100083, P.R.China} \email{cwl15@cumtb.edu.cn}
\author{Hailiang Liu}
\address{Mathematics Department, Iowa State University, Ames, IA 50011, USA} \email{hliu@iastate.edu}
\keywords{Competition for resources, steady states, Evolutionary Stable Distribution, relative entropy}
\subjclass{37N25, 65M08, 92D15}
\begin{abstract}
This paper is concerned with a mathematical model of competition for resource where species consume noninteracting resources. This system of differential equations is formally obtained by renormalizing the MacArthur's competition model at equilibrium, and agrees with the trait-continuous model studied by Mirrahimi S, Perthame B, Wakano JY [J. Math. Biol. 64(7): 1189-1223, 2012].
As a dynamical system, self-organized generation of distinct species occurs. The necessary conditions for survival are given. We prove the existence of the evolutionary stable distribution (ESD) through an optimization problem and present an independent algorithm to compute the ESD directly. Under certain structural conditions, solutions of the system are shown to approach the discrete ESD as time evolves.
The time discretization of the system is proven to satisfy two desired properties: positivity and energy dissipation. Numerical examples are given to illustrate certain interesting biological phenomena. \end{abstract}
\maketitle


\section{Introduction}
The evolution by natural selection is the most ubiquitous and well understood process of evolution
in living systems. Adaptive dynamics, as a branch of evolutionary ecology, aims at describing the evolution of species along a phenotypic trait, which characterizes each individual. In general, {individuals which have similar traits (e.g. body size or weight, the age of maturity) face strong competition. Using models of competition-driven speciation, several theoretical investigations have confirmed that the distribution of species in continuous trait space often evolve toward single peak or multiple peaks which are distinct from each other} (see \cite{DJMR08, DJMP05, JR11, MPW12, PB08, SE95}). This provides a mechanism of evolution of diverse but distinct species in nature.

Though most of existing results are typically derived from {a single model which presumes direct species competition of Lotka-Volterra type, competitive interaction among species generally happens in competition for resource such as prey or nutrient. For instance, birds with similar beak shapes compete against each other because of using} similar food resource (see \cite{Sc74}). It is desirable to model the competitive interaction implicitly through the resource competition. A classical result on a discrete model of species competing for noninteracting resources is due to MacArthur \cite{Ma70}, where it was shown that natural selection by interspecies competition will lead to low points on a function landscape.

In the MacArthur system the evolution of each species is invariant under a rescaling of the species abundance. Such rescaling allows us to renormalize the resource equation via the equilibrium relation, as a result we obtain a renormalized discrete system of form
\begin{subequations}\label{rcsdc+}
\begin{align}
& \frac{d}{dt}{f_j}(t)={f_j}(t)\left[a_j + h\sum\limits_{k = 1}^NK_{jk}(R _k(t)-R_k^{*})\right]\;\;\text{for}\;\; 1\leq j\leq N, \\
&\frac{d}{dt}{R_k}(t)= m_k\left[R_k^{*}-R_k(t)\right]-h R_k(t)\sum\limits_{j = 1}^N  K_{jk}{f_j}(t)\;\;\text{for}\;\; 1\leq k\leq N,
\end{align}
\end{subequations}
which incorporates traits in both consumer species and resource. Here all $m_k, R_k^*, h$ are positive, and all $K_{jk}$ are, at least, nonnegative. Doubtless the dynamics of competition requires further elaborate terms
to reflect more realistic situations, we find it interesting to see what consequences  even these equations may have.

In this paper we study (\ref{rcsdc+}) based on the resource competition (see section 2 for further details on model formulations), and advance our understanding of
how parameters (e.g., intrinsic growth rates, competition coefficients) are related for revealing interesting patterns.  We investigate issues such as threshold conditions for survival of population species, possible forms of steady states and time-asymptotic convergence towards discrete steady states. We confirm that self-organized generation of distinct species can happen. We also prove global convergence to the special steady state, the so called evolutionary stable distribution (ESD), characterized by
\begin{align}\label{desd++}
& \forall j \in\{1\leq i \leq N,  \tilde f_i=0\}, \quad
a_j + h\sum\limits_{k = 1}^N K_{jk}(\tilde R_k-R_k^{*})\leq 0.
\end{align}
Interestingly, we demonstrate that the ESD is the unique minimizer of a globally convex function of form
$$
H(f)=-\sum\limits_{j=1}^N \left(a_j-h\sum\limits_{ k=1}^NK_{jk}R_k^{*}\right)f_j- \sum\limits_{k=1}^N m_kR_k^{*}\ln\left(m_k+h\sum_{j = 1}^NK_{jk}f_j\right),
$$
and hence it can be computed by nonlinear programming algorithms independent of the constrained equation  (\ref{desd++}) for the ESD.

The discrete model when taking the resource weight $h$ to be the inverse of the number of species $N$ serves as an approximation system to the continuous model that has been well studied in seminal works of Mirrahimi, Perthame and Wakano \cite{MPW12, MPW14}, where the authors investigate the evolution dynamics of traits when adopting a resource-consumption kernel, instead of a direct competition kernel. Therefore, our discrete model also provides a simple numerical scheme with desired solution properties to simulate the biologically relevant phenomena governed by the continuous model \cite{MPW12}. In addition,  the threshold conditions of survival for the discrete model are in agreement with those for the continuous model in \cite{MPW12}.  The numerical analysis presented in this work is inspired by but differs from some priori works \cite{CJL15, CJL19, CL17, LCS15}, in which various results have been established for several direct competition models consisting of a single equation.

{\bf Contributions.} Under appropriate assumptions (see (\ref{Asspd+})), we shall prove the following assertions regarding the system of equations (\ref{rcsdc+}):
\begin{enumerate}
\item  Positivity preserving and bounded resources (Theorem \ref{thmpde0}).
\item  The total biomass is uniformly bounded for all time $t>0$ (Theorem \ref{thmpde0}).
\item If $a_j\leq 0$ for all $1\leq j \leq N$, species are wiped out and extinction occurs, otherwise system survives (Theorem \ref{extinctthm}).
\item If $\sum_{j=1}^N a_j <0$,  there are no positive steady states $\tilde f_j$ (Theorem \ref{thm3.3}).
\item If $a_j>0$ for some $j$,  persistence and concentration occur on the fittest traits. The threshold conditions established are as follows:
\begin{itemize}
\item if $a_i >0$ for any $i$,  then $\frac{\bar \rho}{h}\delta_{ji}>0$ is a steady state (Theorem \ref{thm3.4});
\item if steady state $\tilde f_j$ is positive for $j\in J$, then $\sum_{j\in J} a_j \geq 0$.
\end{itemize}
\item Let $(\tilde f, \tilde R)$ be a bounded ESD, then (Theorem \ref{thmpde}):
$$
f_j(t)\to \tilde f_j, \quad R_k(t)\to \tilde R_k  \; \text{as}\;  t\to +\infty.
$$
\end{enumerate}
To the best of our knowledge, our paper is the first mathematical study of discrete system (\ref{rcsdc+}).
We focus on qualitative properties of solutions at different times. In addition, we also analyze the discrete model as a stable numerical scheme (see section 5 and 6).

The rest of this paper is organized as follows. In Section 2, we present model formulations of species competition through resource dynamics by linking to the MacArthur's classical model and the continuous model. 
Section 3 identifies necessary and/or sufficient conditions for the survival of population and the forms of steady states. In Section 4, {we prove that the discrete ESD exists and is unique by constructing the nonlinear function mentioned above and such ESD is shown to be asymptotically stable.}
In Section 5 we give details on the time discretization and algorithm for numerical implementation. Finally numerical examples are presented in Section 6. \\
{\bf Notations:} For discrete function $v_j$, \red{$\|v\|_1=\sum\limits_{j = 1}^N|v_j|$}. The Kronecker delta $\delta_{ji}$ is a piecewise function of variables $i$ and $j$ in the sense that $\delta_{ji}=1$ for $j=i$ and 0 for $j\neq i$. We use $(\cdot)_+$ to denote the positive part of the underlying quantity.

\section{Model formulations }
\subsection{The discrete model}
We begin with the MacArthur competition model \cite{Ma70} for noninteracting resources:
\begin{subequations}\label{mac1}
\begin{align}
& \frac{d}{dt}{f_j}(t)={f_j}(t) C_j \left[ a_j^* +  \sum\limits_{k = 1}^N K_{jk} w_k R _k(t)\right] \;\;\text{for}\;\; 1\leq j\leq N, \\
& \frac{1}{R_k} \frac{d}{dt}{R_k}(t)= m_k\left[1- \frac{R_k(t)}{R_k^{*}}\right] - \sum\limits_{j = 1}^N  K_{jk}{f_j}(t)\;\;\text{for}\;\; 1\leq k\leq N.
\end{align}
\end{subequations}
Here ${f_j}(t)$ denotes the abundance of species $j$, $K_{jk}$ is the probability that an individual of species $j$ encounters and consumes an item of resource $k$ of  abundance ${R_k}(t)$. $w_k>0$ is the weight of this item of resource $k$, and $-a_j^*$ is the threshold quantity of resource captured to maintain the population, and $C_j$ is a constant of proportionally governing the biochemical conversion of units from resource $R_k$ into that of $f_j$. \red{$m_k$ denotes the intrinsic rate of natural increase and $R_k^*$ is the carrying capacity of the habitat for resource $R_k$.} The resources have equation (\ref{mac1}b) for their own renewal, and it includes a logistic self-inhibition of resource $k$ by itself.

A simple scaling limit does not seem to lead this model to a direct competition model, as noticed by Abrams and Rueffler \cite{AR09} and Abrams et al. \cite{ARD08}; while this can be achieved when using two different time scales, see \cite{MPW12} for the continuous model. In order to
formulate an alternative, we first rescale $f_j \to h f_j$, where $h>0$
is a scaling parameter. Under this rescaling, (\ref{mac1}a)
remains unchanged, but  (\ref{mac1}b) reduces to
\begin{align}\label{rr0}
 \frac{1}{R_k} \frac{d}{dt}{R_k}(t)= m_k\left[1- \frac{R_k(t)}{R_k^{*}}\right] - h\sum\limits_{j = 1}^N  K_{jk}{f_j}(t).
\end{align}
It is informative to solve the resource equation for $R_k$ at equilibrium,
$$
\frac{R_k}{R_k^{*}}=1-\frac{h}{m_k}\sum\limits_{j = 1}^N  K_{jk}{f_j}.
$$
This upon rewriting gives
$$
\frac{R_k^{*}}{R_k} =\frac{1}{1-\frac{h}{m_k}\sum\limits_{j = 1}^N  K_{jk}{f_j} }\sim
1+\frac{h}{m_k}\sum\limits_{j = 1}^N  K_{jk}{f_j},
$$
where we used a renormalization technique to provide a robust approximation which is valid when $h$ is small. Thus we obtain a renormalized relation for equilibrium as
$$
0=m_k\left[ \frac{R_k^*}{R_k}-1\right] - h\sum\limits_{j = 1}^N  K_{jk}{f_j}.
$$
Replacing the right hand of (\ref{rr0}) by this renormalization we obtain new resource equations
of form
\begin{align}\label{rr}
 \frac{1}{R_k} \frac{d}{dt}{R_k}(t)= m_k\left[ \frac{R_k^*}{R_k}-1\right] - h\sum\limits_{j = 1}^N  K_{jk}{f_j}(t).
\end{align}
Since we are only interested in qualitative solution behaviors, we simply take in equation (\ref{mac1}a)) $C_j=1$ and $w_k=h$. Together we arrive at the system (\ref{rcsdc+}).  In this new formulation $a_j^*$ has been replaced by
$$
a_j^*=a_j - h\sum\limits_{k = 1}^NK_{jk}R_k^{*}.
$$
In this work we shall study the solution behavior of this renormalized system subject to initial data $(f_j(0), R_k(0))$.

We make some basic assumptions:
\begin{subequations}\label{Asspd+}
\begin{align}
&|a_j|<\infty,\; a_j^* \leq -\gamma <0\;\; \text{for}\;  1 \leq j\leq N,\\
&0 \leq K_{jk} \leq K_M\;\; \text{for}\;  1 \leq j,k \leq N,\\
&0<\underline{m}\leq m_k\leq \overline{m}\;\; \text{for}\;  1 \leq k \leq N,\\
&0<R_k^{*}<R_1,\;\;\text{for}\;  1 \leq k \leq N,\\
&M^0=\|f(0)\|_1 +\|R(0)\|_1<\infty.
\end{align}
\end{subequations}
Of particular interest are solutions at equilibrium, called steady states, governed  by
\begin{align*}
& \tilde f_j[a_j^*+ h \sum_k K_{jk} \tilde R_k]=0,  \quad \forall  1\leq j \leq N \\
&m_k\left[R^*_k-\tilde R_k\right]=h   \tilde R_k \sum_{j=1}^N K_{jk}\tilde f_j  \quad \forall  1\leq k \leq N.
\end{align*}
Among all possible steady states, only some special ones are stable and dictate the large time behavior of solutions to (\ref{rcsdc+}).
One such steady state, which may be called an Evolutionary Stable Distribution (ESD) for the equation (\ref{rcsdc+}), is characterized by
\begin{align}\label{desd+}
& \forall j \in\{1\leq i \leq N,  \tilde f_i=0\}, \quad a_j^* + h\sum\limits_{k = 1}^N K_{jk} \tilde R_k\leq 0.
\end{align}
We show that for nonsingular and symmetric  matrix $K_{jk}$, the unique ESD is the minimizer of a convex function
$$
H(f)=-\sum\limits_{j=1}^N a_j^*f_j- \sum\limits_{k=1}^N m_kR_k^{*}\ln\left(m_k+h\sum_{j = 1}^NK_{jk}f_j\right),
$$
with $f=(f_1, f_2,\ldots,f_N)$.

\red{Note that for the MacArthur model (\ref{mac1}) of form
\begin{subequations}\label{mac1+}
\begin{align}
& \frac{d}{dt}{f_j}(t)={f_j}(t) \left[ a_j^* +  h \sum\limits_{k = 1}^N K_{jk}  R _k(t)\right] \;\;\text{for}\;\; 1\leq j\leq N, \\
&  \frac{d}{dt}{R_k}(t)= m_kR_k \left[1- \frac{R_k(t)}{R_k^{*}}\right] - h R_k \sum\limits_{j = 1}^N  K_{jk}{f_j}(t)\;\;\text{for}\;\; 1\leq k\leq N,
\end{align}
\end{subequations}
 the ESD is still categorized by (\ref{desd+}), yet $\tilde R_k$ relates to $\tilde f_j$ by
\begin{align}\label{mR}
m_k \tilde R_k \left[
1-\frac{\tilde R_k}{R_k^*}\right]=h \tilde R_k \sum_{j=1}^N K_{jk}\tilde f_j.
\end{align}
With this constraint, the corresponding optimization can be formulated in terms of a quadratic function
$$
H_M(f)=-\sum\limits_{j=1}^N a_j f_j +\frac{h^2}{2}
\sum\limits_{k=1}^N \frac{R_k^{*}}{m_k} \left(\sum_{i, \tilde R_i \not=0 } K_{ik}f_i\right)^2.
$$
Such a quadratic function is minimized by competition as verified for (\ref{mac1+}a) subject to (\ref{mR})
 with all $\tilde R_k >0$ (\cite{Ma70}). 
}
Here an important question is whether asymptotic convergence towards the ESD can occur for any initial states.
Mathematically rigorous notations and results will follow in the next two sections. 
It goes through detailed estimates that allow us to identify  threshold conditions. In section 4, we prove the existence of the discrete ESD, with an independent construction based on nonlinear optimization. A remarkable property of the discrete system is the Lyapunov functional property which leads to the asymptotic convergence to the ESD.

\subsection{The continuous model} If we take $h=L/N$, and let $L$ be fixed and $N$ tend to $\infty$, we can recover a continuous model which is restricted on a finite domain $X=Y$ of length $L$:
\begin{subequations}\label{mcr+}
\begin{align}
&\partial_t f(x,t) =f(x,t)\left[a(x)+\int_{\mathbb R} K(x,y)(R(y,t)-R^*(y))dy \right],\\
&\partial_t R(y,t)=m(y)\left[R^*(y)-R(y,t)\right]-R(y,t)\int_{\mathbb R} K(x,y)f(x,t)dx.
\end{align}
\end{subequations}
\red{
This is a kind of resource competition system through a chemostat-type model where species consume the common resource which is supplied constantly. In the regime of fast dynamics for the supply of the resource, the authors in \cite{MPW14} show rigorously that the rescaled system converges to a direct competition model.  }

\red{A slightly different form  of the system
\begin{align*}
&\partial_t f(x,t) =f(x,t)\left[-m_1(x) + r(x) \int_{\mathbb R} K(x,y)R(y,t)dy \right],\\
&\partial_t R(y,t)=-m_2(y) R(y,t) +R_{\rm in}(y) -R(y,t)\int_{\mathbb R} r(x) K(x,y)f(x,t)dx
\end{align*}
was studied in \cite{MPW12}, where issues such as (i) necessary conditions for survival of population species;  (ii) typical forms of steady states, and (iii) time-asymptotic convergence to the ESD have been well studied.   
Our estimates on solution properties and time-asymptotic convergence to the discrete ESD are actually uniform in the parameter $h$, hence the corresponding results are comparable to those for the continuous model obtained in \cite{MPW12}.  In addition, our variational characterization of the discrete ESD provides an independent way of construction. We also analyze the discrete model as a positivity-preserving and entropy stable numerical scheme for the continuous model (\ref{mcr+}).
}

\section{Solution properties}\label{sec3}
We study whether consumer species can survive on a given rate of resource-supply.
We begin with the basic estimates on the total population as well as the non-negativity of the species densities.
\subsection{A priori bounds}
\begin{thm}\label{thmpde0}
Assume (\ref{Asspd+}) holds, and let $f_j(t)$ and $R_k(t)$ be the solution to the discrete model (\ref{rcsdc+}). Then,  \\
(i) if $ f_j(0),\; R_k(0)>0$ for every $1\leq j,\;k \leq N$, then $f_j(t)> 0,\; R_k(t)> 0$ for any $t>0$; and
 $$
 R_k(t) \leq C_R:=\max\{ R_k^*, R_k(0)\}.
 $$
(ii) the total biomass is uniformly bounded by
$$
\|f(t)\|_1 +\|R(t)\|_1\leq \max\left\{M^0, \frac{\bar m}{m_0}\|R^*\|_1\right\},
$$
where $M^0=\|f(0)\|_1 +\|R(0)\|_1$ and $m_0=\min\{\gamma, \underline{m}\}$.
\end{thm}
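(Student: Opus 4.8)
The plan is to treat each equation as a linear ODE in its own unknown and then to exploit an exact cancellation of the nonlinear interaction terms when the two balance laws are summed. Since the right-hand side of (\ref{rcsdc+}) is locally Lipschitz, a unique solution exists on a maximal interval $[0,T_{\max})$, and I would establish all bounds on that interval before arguing global existence.

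For part (i), I would first note that on $[0,T_{\max})$ the species equation is linear in $f_j$ with a locally integrable coefficient, so variation of parameters gives
$$f_j(t)=f_j(0)\exp\left(\int_0^t\Big[a_j+h\sum_{k=1}^N K_{jk}(R_k(s)-R_k^*)\Big]\,ds\right),$$
which is strictly positive whenever $f_j(0)>0$, regardless of the sign of $R_k$. Having established $f_j>0$, I would rewrite the resource equation as $\dot R_k=m_kR_k^*-b_k(t)R_k$ with $b_k(t)=m_k+h\sum_j K_{jk}f_j(t)>0$; variation of parameters then represents $R_k(t)$ as a sum of two manifestly positive terms (the decayed initial datum plus the integral of the positive source $m_kR_k^*$), yielding $R_k(t)>0$. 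For the upper bound, positivity of $R_k$, $f_j$ and nonnegativity of $K_{jk}$ make the consumption term nonpositive, so $\dot R_k\le m_k(R_k^*-R_k)$; comparing with the relaxation ODE $\dot y=m_k(R_k^*-y)$, $y(0)=R_k(0)$, whose solution never exceeds $\max\{R_k^*,R_k(0)\}$, the comparison principle gives $R_k(t)\le C_R$.

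The crux is part (ii). I would differentiate $M(t):=\|f(t)\|_1+\|R(t)\|_1=\sum_j f_j+\sum_k R_k$ (absolute values drop by positivity) and substitute $a_j=a_j^*+h\sum_k K_{jk}R_k^*$ into the species equation. The decisive observation is that the bilinear interaction term $h\sum_{j,k}K_{jk}f_jR_k$ produced by the species equation cancels exactly against the one produced by the resource equation, leaving the purely dissipative balance
$$\frac{d}{dt}M(t)=\sum_{j=1}^N a_j^* f_j+\sum_{k=1}^N m_k(R_k^*-R_k).$$
Using $a_j^*\le-\gamma$, $\underline{m}\le m_k\le\bar m$ and positivity then yields $\frac{d}{dt}M\le -m_0 M+\bar m\|R^*\|_1$ with $m_0=\min\{\gamma,\underline{m}\}$. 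Gronwall's inequality gives $M(t)\le M^0e^{-m_0t}+\tfrac{\bar m}{m_0}\|R^*\|_1(1-e^{-m_0t})$, a convex combination of $M^0$ and $\tfrac{\bar m}{m_0}\|R^*\|_1$, hence bounded by the larger of the two. Finally, positivity together with this uniform bound rules out finite-time blow-up, so $T_{\max}=\infty$ and all estimates hold for every $t>0$.

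I would expect the only genuinely delicate point to be spotting the cancellation of the cross terms, which is precisely what converts a coupled nonlinear system into a scalar linear differential inequality amenable to Gronwall; the positivity statements and the comparison argument for $R_k$ are then routine once the linear structure of each individual equation is isolated.
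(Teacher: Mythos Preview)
Your proposal is correct and follows essentially the same route as the paper: the exponential representation for $f_j$, the linear ODE $\dot R_k+b_k R_k=m_kR_k^*$ for positivity of $R_k$, the comparison $\dot R_k\le m_k(R_k^*-R_k)$ for the upper bound, and the cancellation of the bilinear term in $\frac{d}{dt}M$ leading to the scalar Gronwall inequality. Your added remarks on local existence and the convex-combination reading of the Gronwall bound are welcome pieces of rigor that the paper leaves implicit.
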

\begin{proof}
From (\ref{rcsdc+}a), i.e.,
\begin{equation}\label{njpp}
\frac{d}{dt}{f_j}(t)={f_j}(t)G_j(t),\quad G_j(t)=a_j + h\sum\limits_{k = 1}^NK_{jk}(R _k(t)-R_k^{*}),
\end{equation}
it follows that for $f_j(0)> 0$,
\begin{equation}\label{njt}
{f_j}(t) = f_j(0)\exp\left(\int_0^tG_j(\tau)d\tau\right)\geq 0.
\end{equation}
Note that
$$
\frac{d}{dt} R_k(t) \leq m_k(R_k^*-R_k) \leq 0 \quad \text{if}\; R_k\geq R_k^*.
$$
Hence
$$
R_k(t) \leq \max\{ R_k^*, R_k(0)\}.
$$
On the other hand, with $b_k(t)=m_k+h\sum_{j = 1}^N K_{jk}{f_j}(t)>0$, we have
$$
\frac{d}{dt}R_k(t)+R_k(t)b_k(t)= m_kR_k^{*}>0,
$$
hence
$$
R_k(t)>R_k(0)\exp\left(-\int_0^tb_k(\tau)d\tau\right)\geq 0.
$$
This when combined with (\ref{njpp}) gives
$$
G_j(t)\geq -\|a\|_\infty-K_M\|R^*\|_1,
$$
hence considering (\ref{njt}), we have ${f_j}(t)>0$ for any $t>0$.

Furthermore, using (\ref{rcsdc+}a) and (\ref{rcsdc+}b) we have
\begin{align*}
\frac{d}{dt}\left(\|f(t)\|_1+\|R(t)\|_1\right)& =\sum\limits_{j = 1}^Nf_j(t)a_j^* + \sum\limits_{k = 1}^Nm_k\left(R_k^{*}-R_k(t)\right)\\
&\leq -\gamma \|f(t)\|_1-\underline{m} \|R(t)\|_1+ \bar m\|R^*\|_1\\
&\leq -m_0 \left(\|f(t)\|_1+\|R(t)\|_1\right)+ \bar m\|R^*\|_1,
\end{align*}
where $m_0=\min\{\gamma, \underline{m}\}$. We deduce that for $M(t):=\|f(t)\|_1+\|R(t)\|_1$,
$$
M(t) \leq M(0) e^{-m_0 t}  +\frac{\bar m\|R^*\|_1}{m_0}(1- {\rm e}^{-m_0 t}),
$$
and thus $\|f(t)\|_1$ and $\|R(t)\|_1$ are uniformly bounded for all $t>0$.
\end{proof}

\subsection{Necessary conditions for survival}
We now prove necessary conditions for survival, which implies the existence of the threshold level of resource-supply below which species go extinct.
In other words, they reach
$$
(\bar f, \bar R)=(0, R^*).
$$
when the time goes to $\infty$.

We have the following theorem.
\begin{thm}\label{extinctthm}
Under the assumption of Theorem \ref{thmpde0},
 if
\begin{equation}
\label{ace}
a_j \leq 0, \quad j=1, \cdots, N,
\end{equation}
and $\sum_{k=1}^NR_k^*|{\ln} R_k(0)| <\infty$, then
$$
\lim_{t\to \infty} \sum_{j=1}^N f_j(t) =0,\quad \lim_{t\to \infty} R_k=R_k^*.
$$
Otherwise,
$$
 \lim_{t\to \infty} \sum_{j=1}^N f_j(t) >0.
$$
\end{thm}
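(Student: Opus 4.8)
The plan is to build a relative-entropy Lyapunov functional comparing the state to the resource-only equilibrium $(\bar f,\bar R)=(0,R^*)$ and to read off its dissipation. I would set
$$
V(t)=\sum_{j=1}^N f_j(t)+\sum_{k=1}^N R_k^*\left(\frac{R_k(t)}{R_k^*}-1-\ln\frac{R_k(t)}{R_k^*}\right),
$$
which is nonnegative, since each summand of the second group is a convex entropy vanishing only at $R_k=R_k^*$, and finite at $t=0$ because $R_k(0)>0$ (this is where $\sum_k R_k^*|\ln R_k(0)|<\infty$ enters, guaranteeing $V(0)<\infty$). Differentiating along (\ref{rcsdc+}) and using that the coupling terms $h\sum_k(R_k-R_k^*)\sum_j K_{jk}f_j$ cancel between the two equations, I expect the clean identity
$$
\frac{dV}{dt}=\sum_{j=1}^N a_j f_j-\sum_{k=1}^N\frac{m_k}{R_k}(R_k-R_k^*)^2.
$$
Under (\ref{ace}) both terms on the right are $\le 0$, so $V$ is nonincreasing and converges to a finite limit.

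For the extinction statement I would first record the quantitative bounds that make the orbit precompact. Theorem \ref{thmpde0} already gives $R_k\le C_R$ and $\|f\|_1$ bounded; a comparison argument on $\frac{d}{dt}R_k=m_kR_k^*-R_k(m_k+h\sum_jK_{jk}f_j)$, whose sink coefficient is uniformly bounded because $\|f\|_1$ is, then yields a uniform lower bound $R_k(t)\ge\underline R>0$. With $R_k\in[\underline R,C_R]$ the integrated dissipation controls $\int_0^\infty\sum_k(R_k-R_k^*)^2\,dt<\infty$, and since $\dot R_k$ is bounded the integrand is uniformly continuous, so Barbalat's lemma gives $R_k(t)\to R_k^*$. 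To pass from this to $f_j\to0$ I would invoke LaSalle's invariance principle: the $\omega$-limit set lies in the largest invariant subset of $\{\dot V=0\}=\{R=R^*,\ a_jf_j=0\ \forall j\}$. On this set $f_j=0$ whenever $a_j<0$, and remaining there forces $\dot R_k=-hR_k^*\sum_{j:a_j=0}K_{jk}f_j=0$; since the standing assumption $a_j^*\le-\gamma$ means $\sum_kK_{jk}R_k^*\ge\gamma/h>0$ for every $j$ with $a_j=0$, this nonnegative sum can vanish only if those $f_j$ vanish too. Hence the invariant set reduces to $\{f=0,\ R=R^*\}$, giving $\sum_jf_j\to0$ and $R_k\to R_k^*$.

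For the converse ("otherwise") I would argue by contradiction. Suppose some $a_{j_0}>0$ but $\sum_jf_j(t)\to0$. Then $\sum_jK_{jk}f_j\to0$, so the resource equation becomes a vanishing perturbation of $\dot R_k=m_k(R_k^*-R_k)$ and $R_k(t)\to R_k^*$. Writing the growth rate as $\dot f_{j_0}/f_{j_0}=a_{j_0}^*+h\sum_kK_{j_0k}R_k$ and using $a_{j_0}^*+h\sum_kK_{j_0k}R_k^*=a_{j_0}>0$, the right-hand side exceeds $a_{j_0}/2>0$ for all large $t$, forcing $f_{j_0}$ to grow exponentially and contradicting $f_{j_0}\to0$. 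Therefore $\sum_jf_j$ cannot tend to $0$; the stated $\lim_{t\to\infty}\sum_jf_j>0$ then follows once the existence of the limit (hence its positivity) is supplied by the convergence-to-ESD results of Section 4, where some $a_j>0$ makes the ESD nontrivial.

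The main obstacle I anticipate is the borderline index set $\{j:a_j=0\}$ in the extinction case: there $\sum_j a_jf_j$ gives no direct control on those $f_j$, and the $L^2$-in-time bound on $R_k-R_k^*$ does not integrate to an $L^1$ bound, so one cannot simply integrate the $f_{j_0}$ growth rate to zero. This is exactly why I route the argument through LaSalle's principle combined with the structural positivity $\sum_kK_{jk}R_k^*>0$ (itself a consequence of $a_j^*\le-\gamma$) rather than through a direct integral estimate. A secondary technical point is justifying precompactness of the orbit and the applicability of Barbalat/LaSalle, which rests on the a priori upper bounds of Theorem \ref{thmpde0} together with the uniform lower bound on $R_k$.
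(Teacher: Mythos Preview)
Your proposal is correct and shares the paper's core ingredients: the same Lyapunov functional (your $V$ equals the paper's $F(t)=-\sum_k R_k^*\ln R_k+\sum_j f_j+\sum_k R_k$ up to an additive constant) and the same dissipation identity $\dot V=\sum_j a_j f_j-\sum_k\frac{m_k}{R_k}(R_k-R_k^*)^2$. Both you and the paper obtain $R_k\to R_k^*$ via integrability of the dissipation plus a Barbalat-type argument, and part~(ii) is argued identically by contradiction.

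The genuine difference lies in how you conclude $\sum_j f_j\to 0$. The paper does not invoke LaSalle; instead it runs a second Barbalat argument to get $\sum_j|a_j|f_j\to 0$, then shows $\dot R_k$ is uniformly continuous so that $\dot R_k\to 0$, reads off $\sum_{j,k}K_{jk}R_k^*f_j\to 0$ from the resource equation, and combines these into $\sum_j a_j^*f_j\to 0$, which together with $a_j^*\le-\gamma$ forces $\sum_j f_j\to 0$. Your LaSalle route packages the same information more conceptually---identifying the invariant subset of $\{\dot V=0\}$ exploits the structural fact $\sum_k K_{jk}R_k^*\ge\gamma/h>0$ in one stroke---whereas the paper's approach is more hands-on and avoids any appeal to $\omega$-limit machinery or to the uniform lower bound $R_k\ge\underline R$ (which you introduce but the paper never needs, since $R_k\le C_R$ alone suffices to pass from the weighted dissipation to $\int_0^\infty\sum_k(R_k-R_k^*)^2\,dt<\infty$). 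Your remark that the conclusion ``$\lim>0$'' in part~(ii) tacitly presumes existence of the limit is well taken; the paper's proof, like yours, literally only rules out $\lim=0$.
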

\red{In order to prove this result,  we prepare a lemma.
\begin{lem}\label{lim0} Let  $Q(t)$ be a nonnegative function on $[0, +\infty)$.
If $\int_0^{+\infty} Q(t)dt < +\infty$ and $|Q^\prime(t)|$ bounded,
then $\lim_{t\rightarrow+\infty}Q(t)=0$.
\end{lem}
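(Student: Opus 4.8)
The plan is to argue by contradiction, exploiting the fact that a bound on $|Q'(t)|$ makes $Q$ Lipschitz continuous and hence unable to drop quickly. Suppose $Q(t)$ does not converge to $0$ as $t\to+\infty$. Since $Q$ is nonnegative, this failure means there exist $\varepsilon>0$ and a sequence $t_n\to+\infty$ with $Q(t_n)\geq\varepsilon$. Let $M<\infty$ be a bound for $|Q'(t)|$ on $[0,+\infty)$.

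First I would use the derivative bound to upgrade the pointwise inequalities $Q(t_n)\geq\varepsilon$ into lower bounds on short intervals. By the mean value theorem, for any $t$ with $|t-t_n|\leq\delta$ one has $|Q(t)-Q(t_n)|\leq M|t-t_n|\leq M\delta$, so the choice $\delta=\varepsilon/(2M)$ forces
$$
Q(t)\geq Q(t_n)-M\delta\geq \varepsilon-\tfrac{\varepsilon}{2}=\tfrac{\varepsilon}{2}
\quad\text{on}\quad [t_n-\delta,\,t_n+\delta].
$$
For all sufficiently large $n$ we have $t_n\geq\delta$, so these intervals lie inside $[0,+\infty)$.

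Next I would thin out the sequence: passing to a subsequence (still written $(t_n)$) with $t_{n+1}>t_n+2\delta$, which is possible because $t_n\to+\infty$, makes the intervals $[t_n-\delta,t_n+\delta]$ pairwise disjoint. Each then contributes at least $2\delta\cdot(\varepsilon/2)=\delta\varepsilon$ to the integral, whence
$$
\int_0^{+\infty}Q(t)\,dt\;\geq\;\sum_{n}\int_{t_n-\delta}^{t_n+\delta}Q(t)\,dt\;\geq\;\sum_{n}\delta\varepsilon\;=\;+\infty,
$$
contradicting $\int_0^{+\infty}Q(t)\,dt<+\infty$. Therefore $Q(t)\to 0$.

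The argument is essentially routine, and I expect no serious obstacle; the only point demanding a little care is the extraction of a subsequence with well-separated $t_n$, so that the intervals on which $Q$ is bounded below are genuinely disjoint and their contributions are not double-counted. Everything else follows from the Lipschitz estimate and the divergence of the resulting series.
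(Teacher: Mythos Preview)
Your proof is correct; it is the standard Barbalat-type argument and requires no further work. The only cosmetic point is that taking $\delta=\varepsilon/(2M)$ presumes $M>0$, but this is harmless since any positive bound for $|Q'|$ will do.

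The paper proceeds differently. Rather than immediately arguing by contradiction, it first shows that the limit $\lim_{t\to\infty}Q^2(t)$ exists by proving $\{Q^2(n)\}_{n\in\mathbb N}$ is Cauchy via the identity
\[
|Q^2(m)-Q^2(n)|=2\left|\int_n^m Q(t)Q'(t)\,dt\right|\le 2\|Q'\|_\infty\int_n^m Q(t)\,dt,
\]
then upgrades this to convergence along all $t$, and only at the end argues that the limit must be $0$ (else the integral would diverge). Your route is more elementary and direct: it bypasses the $Q^2$ device and the Cauchy step entirely, using only the Lipschitz estimate to inflate pointwise values into lower bounds on disjoint intervals. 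The paper's approach has the minor conceptual advantage of separating the existence of the limit from its identification, but your argument is shorter and is the version most readers would recognize.
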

\begin{proof}
Note for $m>n$ and $m, n$ are integers, we have
\begin{align*}
 |Q^2(m)-Q^2(n)|&= \Big|\int_n^m\frac{d}{dt}Q^2(t)dt\Big|= 2\Big|\int_n^mQ(t)Q^\prime(t)dt\Big|\\
&\leq 2\parallel Q^\prime\parallel_{\infty}\Big|\int_n^mQ(t)dt\Big|\rightarrow 0,\quad {\rm as} \; n\rightarrow +\infty,
\end{align*}
which implies that $\{Q^2(n)\}_{n\geq 1}$ is Cauchy sequence and thus admits a limit $L$. For any $t \in [0, \infty)$,  we consider
\begin{align*}
|Q^2(t)-L|&= \lim_{n\rightarrow +\infty}|Q^2(n)-Q^2(t)|= \lim_{n\rightarrow +\infty}\Big|\int_t^n\frac{d}{ds}Q^2(s)ds\Big|\\
 &\leq 2\parallel Q^\prime\parallel_{\infty}\Big|\int_t^{+\infty}Q(s)ds\Big|\rightarrow 0, \quad {\rm as}\; t\rightarrow +\infty.
\end{align*}
Also we must have $L=0$. Otherwise,
$$
\int_0^{+\infty}Q(t)dt\geq \int_n^{2n}Q(t)dt\geq \frac{\sqrt{L}}{\sqrt{2}}n\rightarrow +\infty,\quad {\rm as} \; n\rightarrow +\infty.
$$
Thus we prove the lemma.
\end{proof}
}

\begin{proof}
i) We prove extinction under the assumption (\ref{ace}).

Set
$$
F(t)=- \sum_{k=1}^NR_k^* \ln R_k  + \sum_{j=1}^N f_j + \sum_{k=1}^N R_k.
$$
A direct calculation with regrouping shows that
\begin{equation}
\label{dFdt}
\frac{dF}{dt} =- \sum_{k=1}^N\frac{m_k}{R_k}(R_k-R_k^*)^2 +\sum_{j=1}^Na_j f_j.
\end{equation}
Under the assumption  (\ref{ace}) we have
\begin{equation}
\label{Fdp}
\frac{dF}{dt} \leq - \sum_{k=1}^N\frac{m_k}{R_k}(R_k-R_k^*)^2.
\end{equation}
 for any $t>0$. This yields
$$
F(t)+\int_0^t \sum_{k=1}^N\frac{m_k}{R_k}(R_k-R_k^*)^2dt \leq F(0)<\infty.
$$
Note that $F(t)$ is lower-bounded, and $F$ is a decreasing function with respect to $t$ combining with (\ref{Fdp}), so that
$F(\infty)$ exists and
$$
\int_0^\infty \sum_{k=1}^N\frac{m_k}{R_k}(R_k-R_k^*)^2dt\leq F(0)-F(\infty) <\infty.
$$
Set
$$
Q(t)=\frac{1}{2}\sum_{k=1}^{N}(R_k-R_k^*)^2,
$$
then
\begin{equation}
\label{Qtb}
\int_0^\infty Q(t)dt\leq \frac{1}{2}\frac{C_R}{\underline{m}}\int_0^\infty \sum_{k=1}^N\frac{m_k}{R_k}(R_k-R_k^*)^2dt<\infty.
\end{equation}
We can estimate
\begin{align}\label{QPte}
Q'(t)& = \sum_{k=1}^{N} (R_k-R_k^*) \frac{d}{dt} R_k \notag\\
& = \sum_{k=1}^{N}(R_k-R_k^*)\left[m_k(R_k^*-R_k)-hR_k\sum_{j=1}^NK_{jk}f_j\right]\notag\\
& = -\sum_{k=1}^{N}m_k(R_k-R_k^*)^2-h\sum_{k=1}^{N}(R_k-R_k^*)R_k\sum_{j=1}^NK_{jk}f_j\\
&\leq -\underline{m}\sum_{k=1}^{N}(R_k-R_k^*)^2+h K_M\|f\|_1\sum_{k=1}^{N}|R_k-R_k^*|R_k\notag\\
&\leq -\frac{\underline{m}}{2}\sum_{k=1}^{N}(R_k-R_k^*)^2+\frac{h^2}{2\underline{m}}K_M^2\|f\|_1^2\|R\|_2^2,\notag
\end{align}
that is
\begin{equation}
Q'(t)\leq -\underline{m}Q(t)+C,
\end{equation}
where we have used the uniform bounds for both $f$ and $R$ stated in Theorem \ref{thmpde0}.
Hence
$$
Q(t)\leq \max\left\{Q(0), \frac{C}{\underline{m}}\right\},
$$
which applied to (\ref{QPte}) yields
$$
|Q'(t)|\leq 2\bar{m} Q(t)+hK_M\|f\|_1\|R\|_\infty (\|R\|_1+||R_*\|_1)<\infty,
$$
which together with (\ref{Qtb}) \red{and Lemma \ref{lim0}} leads to $\lim_{t\to + \infty}Q(t)=0$ and thus
\begin{align}\label{kk}
\lim_{t\to \infty} R_k =R_k^*,\quad 1\leq k\leq N.
\end{align}
It remains to prove that $f$ becomes \red{extinct}.  From (\ref{dFdt}) we have
$$
\int_0^\infty \sum_{j=1}^N|a_j| f_jdt =-\int_0^\infty \sum_{j=1}^Na_j f_jdt\leq F(0)-F(\infty) <\infty.
$$
From (\ref{rcsdc+}a) it follows
\begin{align*}
\left|\frac{d}{dt}(\sum_{j=1}^N|a_j| f_j)\right|&=\left|\sum_{j=1}^Nf_j|a_j|\left[a_j+h\sum_{k=1}^NK_{jk}(R_k-R_k^*)\right]\right|\\
&\leq \|a\|_\infty \left[\|a\|_\infty+K_M(\|R\|_1+\|R^*\|_1)\right]\|f\|_1< \infty.
\end{align*}
Hence
\begin{align}\label{af}
\lim_{t\to \infty} \sum_{j=1}^N a_j f_j=0.
\end{align}
We claim that $\frac{d}{dt} R_k$ is uniformly continuous in $t$, this together with the fact that $\lim_{t\to \infty} R_k =R_k^*$ ensures that
\begin{align}\label{tR}
\lim_{t\to \infty} \frac{d}{dt} R_k =0
\end{align}
for each $k$.
In fact, if we pick any $t_1, t_2$ such that $|t_1-t_2|<\delta$ so that from (\ref{rcsdc+}b), we have
\begin{align*}
\left|\frac{d}{dt} R_k(t_1)- \frac{d}{dt} R_k(t_2)\right|
&\leq \left(\bar{m}+K_M\|f(t_1)\|_1\right)\mid R_k(t_2)-R_k(t_1)\mid\\
&+C_R K_Mh\sum_{j=1}^N  \mid f_j(t_1)-f_j(t_2)\mid.
\end{align*}
From  (\ref{rcsdc+}) we see that both $|\frac{d}{dt} R_k|$ and $|\frac{d}{dt}f_j|$ are uniformly bounded, hence
$R_k$ and $f_j$ are Lipschitz continuous. This ensures that  $\frac{d}{dt} R_k$ uniformly continuous in $t$.
This proves the uniform continuity of  \red{$\frac{d}{dt} R_k$} in $t$, as claimed.

Substitution of both (\ref{kk}) and (\ref{tR}) into the equation for $R_k$ we have
$$
\lim_{t\to \infty} R_k\sum_{j=1}^N K_{jk}f_j=0,
$$
which combines with (\ref{kk}) yields
$$
\lim_{t\to \infty} \sum_{j,k=1}^NK_{jk} R_k^*f_j=0.
$$
This using (\ref{af}) shows that
$$
\lim_{t\to \infty} \sum_{j=1}^N a_j^*f_j=\lim_{t\to \infty} \sum_{j=1}^N a_jf_j - \lim_{t\to\infty} h\sum_{j,k=1}^NK_{jk}f_jR_k^*=0.
$$


ii) If $a_j\leq 0$ is not satisfied for all $1\leq j\leq N$, i.e., there exists some $i$ such that
$a_i>0$,
then
$$
\lim_{t\to \infty}\sum_{j=1}^Nf_j>0.
$$
Otherwise, we assume $\lim_{t\to \infty}\sum_{j=1}^Nf_j(t)=0$, then
$$
\lim_{t\to \infty}\sum_{j=1}^NK_{jk}f_j(t)=0.
$$
It follows that
$$
\frac{d}{dt}{R_k}(t)= m_k\left[R_k^{*}-R_k(t)\right]+\beta_k(t)
$$
with $\beta_k(t) \to 0$ as $t\to \infty$. Set $S_k(t)=R_k(t)-R_k^*$, then
\begin{subequations}\label{Skfj}
\begin{align}
&\frac{d}{dt}{S_k}(t)= -m_k{S_k}(t)+\beta_k(t),\\
& \frac{d}{dt}{f_j}(t)={f_j}(t)\left(a_j + h\sum\limits_{k = 1}^NK_{jk}S_k(t)\right).
\end{align}
\end{subequations}
From (\ref{Skfj}a) it follows
$$
{S_k}(t)={S_k}(0)e^{-m_kt}+\int_0^t\beta_k(\tau)e^{m_k(\tau-t)}d\tau\rightarrow 0\quad as \quad t \to \infty,
$$
which together with the boundness of $K_{jk}$ leads to
$$
\lim_{t\to \infty} \sum\limits_{k = 1}^NK_{jk}S_k(t)=0.
$$
Hence, for $j=i$ in (\ref{Skfj}b),
\begin{align*}
\frac{d}{dt}{f_i}(t)&={f_i}(t)\left(a_i + h\sum\limits_{k = 1}^NK_{ik}S_k(t)\right)\\
&\geq \frac{a_i}{2}f_i
\end{align*}
for $t\gg1$. Therefore,
$$
\lim_{t\to \infty}f_i(t)=\infty.
$$
This is a contradiction with $ \sum_{j=1}^Nf_j(t) \to 0$.  
\end{proof}

The large time behavior of solutions to discrete model (\ref{rcsdc+}) is known to be governed by
the steady states $(\tilde n, \tilde R)$ satisfying
\begin{subequations}\label{desd}
\begin{align}
&\tilde f_j\left[ a_j^* + h\sum\limits_{k = 1}^N K_{jk}\tilde R_k\right]=0, \quad j=1, \cdots, N,\\
&m_k(R_k^{*}-\tilde R_k)-h \tilde R_k\sum\limits_{j = 1}^N K_{jk}{\tilde f_j}=0, \quad k=1, \cdots, N.
\end{align}
\end{subequations}
We call it a positive steady state if $\tilde f_j$ is positive for all traits index $j$.

Regarding conditions on solution extinction or concentration, we have the following results.

\begin{thm}\label{thm3.3}
There are no positive steady states $\tilde f$  if
$$
\sum_{j=1}^N a_j < 0.
$$
Moreover,
$$
 \sum_{\{j|\;\tilde f_j>0\}} a_j \geq   0.
$$
\end{thm}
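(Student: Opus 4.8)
The plan is to exploit the two defining relations of a steady state in (\ref{desd}) together with the sign structure coming from the nonnegativity of $K_{jk}$ and $\tilde f_j$. The whole argument reduces to a single observation: at any steady state every resource level lies below its carrying capacity, i.e. $\tilde R_k \leq R_k^*$.

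First I would establish $0 < \tilde R_k \leq R_k^*$. Rewriting (\ref{desd}b) as $m_k R_k^* = \tilde R_k\big(m_k + h\sum_{j=1}^N K_{jk}\tilde f_j\big)$ gives
$$
\tilde R_k = \frac{m_k R_k^*}{m_k + h\sum_{j=1}^N K_{jk}\tilde f_j},
$$
which is strictly positive (the denominator exceeds $m_k>0$) and, since $K_{jk}\geq 0$ and $\tilde f_j \geq 0$, satisfies $\tilde R_k \leq R_k^*$, so that $R_k^* - \tilde R_k \geq 0$ for every $k$.

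Next, for each index $j$ in the support $J:=\{j:\tilde f_j>0\}$, equation (\ref{desd}a) forces the bracketed factor to vanish, and substituting $a_j^* = a_j - h\sum_{k=1}^N K_{jk}R_k^*$ turns this into
$$
a_j = h\sum_{k=1}^N K_{jk}\big(R_k^* - \tilde R_k\big), \quad j\in J.
$$
Summing over $j\in J$ and using $R_k^*-\tilde R_k\geq 0$ together with $K_{jk}\geq 0$ yields
$$
\sum_{j\in J} a_j = h\sum_{j\in J}\sum_{k=1}^N K_{jk}\big(R_k^*-\tilde R_k\big)\geq 0,
$$
which is precisely the ``moreover'' assertion $\sum_{\{j|\,\tilde f_j>0\}} a_j \geq 0$. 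For the first claim I would argue by contraposition: a positive steady state has $\tilde f_j>0$ for all $j$, so $J=\{1,\dots,N\}$ and the bound above reads $\sum_{j=1}^N a_j\geq 0$; hence $\sum_{j=1}^N a_j<0$ rules out any positive steady state.

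Since each step is a direct algebraic manipulation, I do not anticipate a genuine obstacle. The only points requiring care are justifying $\tilde R_k>0$ (so that on $J$ the vanishing in (\ref{desd}a) must come from the bracket rather than from the resource factor) and the sign $R_k^*-\tilde R_k\geq 0$, both of which follow cleanly from (\ref{desd}b); everything else is bookkeeping via the identity $a_j^*=a_j-h\sum_k K_{jk}R_k^*$.
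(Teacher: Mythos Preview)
Your proof is correct and follows essentially the same route as the paper's: both derive $\tilde R_k = m_kR_k^*/(m_k + h\sum_j K_{jk}\tilde f_j)\le R_k^*$ from (\ref{desd}b), use the vanishing of the bracket in (\ref{desd}a) on $J=\{j:\tilde f_j>0\}$, and sum over $J$ to obtain $\sum_{j\in J}a_j\ge 0$, with the first claim following by taking $J=\{1,\dots,N\}$. Your write-up is in fact slightly more streamlined than the paper's, which introduces an auxiliary constant $K_1=h\sum_{j\in J}\sum_k K_{jk}R_k^*$ before arriving at the same inequality; one minor slip is the parenthetical about a ``resource factor'' in (\ref{desd}a) --- the factors there are $\tilde f_j$ and the bracket, so on $J$ the bracket vanishes simply because $\tilde f_j>0$, with no need to invoke $\tilde R_k>0$ at that step.
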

\begin{proof}
Set $J:=\{j|\;\tilde f_j >0\}$, then
$$
a_j+h\sum_{k=1}^N K_{jk}(\tilde R_k-R_k^*)=0,\;\; j\in J,
$$
from which we deduce that
$$
h \sum_{j\in J} \sum_{k=1}^N K_{jk} \tilde R_k  = - \sum_{j\in J} \left( a_j-h\sum_{k=1}^N K_{jk}R_k^*\right)
=: - \sum_{j\in J }^N a_j +K_1.
$$
On the other hand, from
$$
 \tilde R_k=\frac{m_kR_k^*}{m_k+h\sum_{j=1}^N {K_{jk}} \tilde f_j},
$$
we see that
$$
h \sum_{j\in J} \sum_{k=1}^N K_{jk} \tilde R_k \leq  h \sum_{j\in J} \sum_{k=1}^N  K_{jk}  R_k^*=K_1.
$$
We must have $\sum_{j\in J}  a_j \geq 0$.
\end{proof}
These results suggest that steady state solutions must satisfy $\tilde f_j=0$ for some $j$, yet the persistence set
$\{j|\;\tilde f_j>0\}$ is limited. Nevertheless, the solution can behave as a sum of distinct peaks like Dirac masses.

\subsection{Forms of steady states}
Next we state conditions that the solution is a sum of Dirac masses.
\begin{thm}\label{thm3.4}
For any $i$ such that $a_i>0$, there exists a unique steady state  such that
$$
\tilde f_j=\frac{\bar \rho}{h}\delta_{ji}, \quad \tilde R_k=\frac{m_kR^*_k}{m_k+\bar \rho K_{ik}}
$$
with $\bar \rho>0$.
\end{thm}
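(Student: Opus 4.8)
The plan is to verify that the proposed profile solves the steady-state system (\ref{desd}) by direct substitution, thereby collapsing the whole system to a single scalar equation for the unknown amplitude $\bar\rho$, and then to solve that scalar equation by a monotonicity and intermediate-value argument.

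First I would substitute the ansatz $\tilde f_j=\frac{\bar\rho}{h}\delta_{ji}$ into the resource balance (\ref{desd}b). Since only the $i$-th species is present, $\sum_{j=1}^N K_{jk}\tilde f_j=\frac{\bar\rho}{h}K_{ik}$, so (\ref{desd}b) becomes $m_k(R_k^*-\tilde R_k)=\bar\rho K_{ik}\tilde R_k$, which solves uniquely for $\tilde R_k=\frac{m_k R_k^*}{m_k+\bar\rho K_{ik}}$ --- exactly the claimed form --- for every $\bar\rho\ge 0$. Thus the resource equation imposes no restriction on $\bar\rho$; it merely fixes $\tilde R_k$ in terms of $\bar\rho$. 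Next, the species equations (\ref{desd}a) for $j\neq i$ hold trivially because $\tilde f_j=0$. The only genuine constraint comes from $j=i$, where $\tilde f_i>0$ forces the bracket to vanish: $a_i^*+h\sum_{k=1}^N K_{ik}\tilde R_k=0$.

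Using $a_i^*=a_i-h\sum_k K_{ik}R_k^*$ together with the closed form for $\tilde R_k$, I would rewrite this constraint as $a_i=\Phi(\bar\rho)$, where
$$
\Phi(\rho):=h\rho\sum_{k=1}^N \frac{K_{ik}^2 R_k^*}{m_k+\rho K_{ik}}.
$$
It then remains to show that $\Phi(\rho)=a_i$ has a unique root $\bar\rho>0$. Differentiating termwise gives $\Phi'(\rho)=h\sum_k \frac{m_k K_{ik}^2 R_k^*}{(m_k+\rho K_{ik})^2}>0$, so $\Phi$ is strictly increasing, hence injective. Moreover $\Phi(0)=0$ and, passing to the limit in each summand, $\lim_{\rho\to\infty}\Phi(\rho)=h\sum_{k=1}^N K_{ik}R_k^*$. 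By continuity, the equation $\Phi(\bar\rho)=a_i$ has a (necessarily unique) positive solution precisely when $0<a_i<h\sum_k K_{ik}R_k^*$.

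The step that requires care --- and where the standing hypotheses enter --- is checking that $a_i$ lies in this open range. The lower bound $a_i>0$ is the hypothesis of the theorem. The upper bound is equivalent to $a_i^*=a_i-h\sum_k K_{ik}R_k^*<0$, which is guaranteed by assumption (\ref{Asspd+}a). This same inequality also forces $K_{ik}>0$ for at least one $k$, since otherwise $a_i^*=a_i>0$ would contradict (\ref{Asspd+}a); this is what makes $\Phi$ genuinely increasing rather than identically zero. With both bounds in hand, strict monotonicity and the intermediate value theorem yield a unique $\bar\rho>0$, and the corresponding $(\tilde f,\tilde R)$ is the asserted steady state.
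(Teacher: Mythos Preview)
Your proof is correct and follows essentially the same strategy as the paper: reduce the steady-state system to a single scalar equation for the amplitude $\bar\rho$, then invoke monotonicity and the intermediate value theorem. The paper works with $g(\rho)=a_i^*+h\sum_k K_{ik}\frac{m_kR_k^*}{m_k+\rho K_{ik}}$, whereas you work with $\Phi(\rho)=a_i-g(\rho)$; the two are equivalent, and your added observation that assumption (\ref{Asspd+}a) forces $K_{ik}>0$ for some $k$ (so that $\Phi$ is genuinely strictly increasing) is a nice point the paper leaves implicit.
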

\begin{proof}
Let $\tilde f_j=\frac{\rho}{h}\delta_{ji}$ with $\rho$ to be determined later. For this to be a steady state we must have
$$
a_i+h\sum_{k=1}^N K_{ik}(\tilde R_k-R_k^*)=0  \quad \text{and}\;   \tilde R_k=\frac{m_kR_k^*}{m_k+ \rho {K_{ik}}}.
$$
That is $g(\rho)=0$ with
$$
g(\rho):=a_i-h\sum_{k=1}^NK_{ik}R_k^* +h\sum_{k=1}^N\frac{m_kR_k^*K_{ik}}{m_k+\rho K_{ik}}.
$$
Note that $g(0)=a_i>0$ and $g(\infty)=a_i^*\leq -\gamma<0$. Using also the fact that $g(\cdot)$ is monotone, {we deduce that there is a unique solution $\rho=\bar \rho$ satisfying $g(\bar \rho)=0$.}
\end{proof}
\begin{rem}
When $a_j\leq 0$ for all $1\leq j \leq N$, species are wiped out and extinction occurs as we saw it in Theorem \ref{extinctthm}.
But for $a_j>0$ for some $j$, we may expect persistence and concentration on the fittest traits.
The threshold conditions established so far are as follows:
\begin{itemize}
\item if $a_i>0$ for any $i$, then $\frac{\bar \rho}{h}\delta_{ji}>0$ is a steady state;
\item if $\sum_{j=1}^N a_j <0$, not all $\tilde f_j$ is positive;
\item if  $\tilde f_j$ is positive for $j\in J$, then  $\sum_{j\in J} a_j \geq 0$.
\end{itemize}
\end{rem}
We proceed to argue that the existence of $\tilde f_j$ with multiple distinct peaks is possible. For instance, we can find a sufficient condition for the existence of two distinct peaks.

Assume $a_i>0$, $a_l>0$ for $i\neq l$ and $i, l \in \{1,2,\cdots,N\}$ with $a_i^*,\; a_l^*\leq -\gamma<0$.
Under certain conditions we can determine $\rho_1$ and $\rho_2$ such that
$$
\tilde f_j=\frac{1}{h} (\rho_1 \delta_{ji} +\rho_2\delta_{jl})
$$
is a steady state for (\ref{desd}). This amounts to finding zeros for a coupled nonlinear system of two equations
$$
\left\{\begin{array}{l}
F_1(\rho_1,\rho_2):=a_i^* +h\sum\limits_{k=1}^NK_{ik}\frac{m_kR_k^*}{m_k+\rho_1K_{ik}+\rho_2 K_{lk}}=0,\\
F_2(\rho_1,\rho_2):=a_l^* +h\sum\limits_{k=1}^NK_{lk}\frac{m_kR_k^*}{m_k+\rho_1K_{ik}+\rho_2 K_{lk}}=0.
\end{array} \right.
$$
By the intermediate value theorem and monotonicity of $F_j(\rho_1,\rho_2)$ in terms of its arguments $\rho_1$ and $\rho_2$, respectively, similar to that
shown in Theorem \ref{thm3.4}, there must exist a unique $(\rho_1^i, 0)$ and $(0, \rho_2^i)$ such that
$$
F_1(\rho_1^i, 0)=0,\quad F_1(0, \rho_2^i)=0.
$$
In other words $F_1(\rho_1,\rho_2)=0$ is a curve passing through $(\rho_1^i, 0)$ and $(0, \rho_2^i)$. In a similar manner, $F_2(\rho_1,\rho_2)=0$ is a curve passing through two unique points $(\rho_1^l, 0)$ and $(0, \rho_2^l)$. These two curves must interact if
\begin{equation}\label{ff}
F_2(\rho_1^i, 0)\cdot F_2(0, \rho_2^i)<0.
\end{equation}
The existence of $(\rho_1, \rho_2)$ is thus guaranteed, though (\ref{ff}) is only an implicit condition on $K_{ik},\; K_{lk}$ and $a_i,\; a_l$.

\section{Stability of steady states}\label{sec4}
As shown in the previous section there are many steady states: we have described possible steady states and proved that not all traits can be present, concentrations can exist. We now address the question of stability of a special class of steady states (the discrete ESD) and show how the solution evolves towards it as time becomes large.   

\subsection{Characterization of ESD}\label{sec4.1} For a nonnegative abundance with bounded mass $h\sum_{j=1}^N \tilde f_j$, the steady state characterized by (\ref{desd})
is called an Evolutionary Stable Distribution (ESD) for the equation (\ref{rcsdc+}) if
\begin{align}\label{desd*}
& \forall j \in\{1\leq i \leq N,  \tilde f_i=0\}, \quad G_j(\tilde R):=a_j + h\sum\limits_{k = 1}^N K_{jk}(\tilde R_k-R_k^{*})\leq 0.
\end{align}
This condition is essential for proving  the uniqueness.

We impose the following condition
\begin{equation}\label{KNS}
K=( K_{jk})_{N \times N} \quad \text{is a nonsingular matrix},
\end{equation}
which is important for the discrete model (\ref{rcsdc+}) to guarantee \red{ the global convexity} of nonlinear function and the uniqueness of discrete ESD.

For proving the existence of the ESD, we adopt a variational construction.
In other words,  we show that the existence of ESD is equivalent to the existence of a minimizer of the nonlinear function
$$
H(f)=-\sum\limits_{j=1}^N \left(a_j-h\sum\limits_{ k=1}^NK_{jk}R_k^{*}\right)f_j- \sum\limits_{k=1}^N m_kR_k^{*}\ln\left(m_k+h\sum_{j = 1}^NK_{jk}f_j\right),
$$
with $f=(f_1, f_2,\ldots,f_N)$. Note that
$$
\frac{\partial H}{\partial f_i}= -\left(a_i-h\sum\limits_{ k=1}^NK_{ik}R_k^{*}\right) -
h\sum_{k=1}^{N}K_{ik}\frac{m_kR_k^{*}}{m_k+h\sum_{j=1}^NK_{jk}f_j}
$$
and
$$
\frac{\partial^2 H}{\partial f_i \partial f_l }=
h^2\sum_{k=1}^{N}K_{ik}\frac{m_kR_k^{*}}{(m_k+h\sum_{j=1}^NK_{jk}f_j)^2}\sum_{l=1}^NK_{lk},
$$
thus  
the Hessian matrix of $H$ can be expressed as
$$
D^2H=MM^T, \quad M=(M_{jk})=\left(\frac{h\sqrt{R_k^*m_k}}{m_k+h\sum_{i = 1}^NK_{ik} f_i}K_{jk}\right).
$$
By the assumption (\ref{KNS}) the determinant of $M$
$$
\det(M)=\left(\prod\limits_{k=1}^N\frac{h\sqrt{R_k^*m_k}}{m_k+h\sum_{i = 1}^NK_{ik} f_i}\right)\det(K)\neq0.
$$
This indicates that $D^2H$ is a positive definite matrix and $H$ is global convex.

With the function $H$ defined above, an ESD can be expressed by
\begin{align}\label{rr}
\tilde R_k=\frac{m_kR_k^{*}}{m_k+h\sum_{j = 1}^N K_{jk}\tilde f_j},\quad 1\leq k\leq N
\end{align}
with $\tilde f$ obtained by solving the following problem
\begin{subequations}\label{ESDe}
\begin{align}
& \partial_{f_i} H(f)  = 0 \quad {\rm for} \quad  {f_i} > 0, \quad {\rm and}\;\;  \nabla_fH \geq 0,\\
& \hbox{subject to}\quad f\in \{f\in \mathbb{R}^N\mid f\geq0\}.
\end{align}
\end{subequations}
We recall that this constrained problem is equivalent to another nonlinear programming problem(see the proof in ref. \cite{LCS15}).
\begin{lem}\label{lem_equ}
Suppose (\ref{Asspd+}) is satisfied, the problem (\ref{ESDe}) is equivalent to the nonlinear programming problem
\begin{subequations}\label{MinH}
\begin{align}
& \mathop {\min }\limits_{f \in {\mathbb{R}}^N} H, \\
& \hbox{subject to}\quad f\in S = \{f\in \mathbb{R}^N\mid f\geq0\}.
\end{align}
\end{subequations}
\end{lem}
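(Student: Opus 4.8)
The plan is to recognize the conditions in (\ref{ESDe}a) as precisely the first-order (Karush--Kuhn--Tucker) optimality conditions for the convex program (\ref{MinH}), and then to exploit the global convexity of $H$ established above to show that these conditions are simultaneously necessary and sufficient for a global minimizer. Since the feasible set $S=\{f\in\mathbb{R}^N\mid f\geq 0\}$ is a convex polyhedron cut out by the affine constraints $-f_i\leq 0$, no constraint qualification beyond feasibility is required, so the equivalence will hold without any extra regularity hypothesis. I would also note at the outset that $H$ is continuously differentiable on all of $S$, because $m_k>0$ together with $K_{jk}\geq 0$ and $f_j\geq 0$ keeps the argument $m_k+h\sum_j K_{jk}f_j$ of the logarithm strictly positive; this legitimizes the partial derivatives used throughout.

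First I would prove the forward implication, that a solution of (\ref{MinH}) satisfies (\ref{ESDe}). Let $f\in S$ be a global minimizer of $H$ over $S$. For an index with $f_i=0$ the only feasible variation is $f_i\mapsto f_i+t$ with $t\geq 0$, and optimality of the one-sided variation forces $\partial_{f_i}H(f)\geq 0$; for an index with $f_i>0$ both directions $\pm e_i$ are feasible, so $\partial_{f_i}H(f)=0$. Collecting these gives $\nabla_f H(f)\geq 0$ componentwise together with $\partial_{f_i}H(f)=0$ whenever $f_i>0$, which is exactly (\ref{ESDe}a).

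Conversely, for the implication (\ref{ESDe})$\Rightarrow$(\ref{MinH}) I would invoke convexity directly. By (\ref{KNS}) the Hessian $D^2H=MM^T$ is positive definite, so $H$ is convex and the gradient inequality $H(g)\geq H(f)+\nabla_f H(f)\cdot(g-f)$ holds for every competitor $g\in S$. It then suffices to check that the inner product is nonnegative. Splitting the sum $\nabla_f H(f)\cdot(g-f)=\sum_i \partial_{f_i}H(f)\,(g_i-f_i)$ according to the sign of $f_i$: on the set $\{f_i>0\}$ the factor $\partial_{f_i}H(f)$ vanishes, while on $\{f_i=0\}$ one has $\partial_{f_i}H(f)\geq 0$ and $g_i-f_i=g_i\geq 0$ since $g\in S$, so every surviving term is nonnegative. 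Hence $H(g)\geq H(f)$ for all feasible $g$, and $f$ solves (\ref{MinH}).

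Almost all of the content is carried by the global convexity already proved, so no step is genuinely hard; the one point demanding care is the treatment of the boundary of $S$, namely justifying the one-sided variation argument at indices where $f_i=0$ and verifying that the complementary-slackness splitting into $\{f_i>0\}$ and $\{f_i=0\}$ exhausts all cases. I would close by remarking that the same positive definiteness of $D^2H$ yields strict convexity and hence uniqueness of the minimizer, which is precisely why assumption (\ref{KNS}) is imposed and which will later translate into uniqueness of the discrete ESD.
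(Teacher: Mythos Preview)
Your proof is correct and follows essentially the same route as the paper's. Note that the paper itself does not include a proof in the body but defers to \cite{LCS15}; the argument sketched there (and present in commented-out form in the source) also rests entirely on the global convexity of $H$ over $S$, using a second-order Taylor expansion with the positive semidefinite Hessian where you use the first-order gradient inequality, and establishing the feasible-direction inequality $\alpha\cdot\nabla_f H(f^*)\geq 0$ by contradiction where you argue by direct one-sided coordinate variations. These are cosmetic differences; your version is arguably tidier. One small remark: you invoke (\ref{KNS}) for positive definiteness, but the lemma as stated assumes only (\ref{Asspd+}); since $D^2H=MM^{\top}$ is automatically positive semidefinite regardless of whether $K$ is nonsingular, convexity---and hence your gradient inequality---holds without (\ref{KNS}), so the argument goes through either way.
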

\begin{rem}
In fact, the problem (\ref{MinH}) is also equivalent to the following formulation:
\begin{subequations}\label{MinH+}
\begin{align}
& \mathop {\min }\limits_{f \in {\mathbb{R}}^N} H, \\
& \hbox{subject to}\quad f\in  \{f \geq0\;\; {\rm and}\;\; \nabla H(f) \geq 0\}.
\end{align}
\end{subequations}
\end{rem}
Based on Lemma \ref{lem_equ}, we only need to show $H(\cdot)$ admits a minimizer among elements in $S$.
\begin{lem}\label{MinE}
If (\ref{Asspd+}) holds, then there is at least one vector $g\in S$ which satisfies
$$
H(g)=\min_{f\in S}H(f).
$$
\end{lem}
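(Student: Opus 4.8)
The plan is to apply the direct method of the calculus of variations. Since $S=\{f\in\mathbb{R}^N\mid f\geq 0\}$ is closed but unbounded, the Weierstrass theorem does not apply directly; instead I would show that $H$ is continuous and coercive on $S$, and then extract a minimizer from a minimizing sequence. First I would check that $H$ is well defined and continuous on all of $S$: for $f\geq 0$ the argument of each logarithm satisfies $m_k+h\sum_{j=1}^N K_{jk}f_j\geq m_k\geq\underline m>0$ by (\ref{Asspd+}), so every term of $H$ is finite and depends continuously on $f$.

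The heart of the argument is coercivity, namely $H(f)\to+\infty$ as $\|f\|\to\infty$ within $S$. Recalling $a_j^*=a_j-h\sum_{k=1}^N K_{jk}R_k^*$, the linear part of $H$ is $\sum_{j=1}^N(-a_j^*)f_j$, and the structural assumption $a_j^*\leq-\gamma<0$ together with $f\geq 0$ gives $\sum_{j=1}^N(-a_j^*)f_j\geq\gamma\|f\|_1$. For the logarithmic part I would use the upper bounds $m_k\leq\overline m$ and $K_{jk}\leq K_M$ to estimate $m_k+h\sum_{j=1}^N K_{jk}f_j\leq\overline m+hK_M\|f\|_1$, whence, writing $A=\sum_{k=1}^N m_kR_k^*>0$,
$$
H(f)\geq\gamma\|f\|_1-A\ln\bigl(\overline m+hK_M\|f\|_1\bigr).
$$
Since linear growth dominates logarithmic growth, the right-hand side tends to $+\infty$ as $\|f\|_1\to\infty$ (equivalently as $\|f\|\to\infty$, all norms on $\mathbb{R}^N$ being equivalent). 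This is precisely coercivity, and it also shows $H$ is bounded below on $S$.

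With continuity and coercivity in hand, I would finish by a standard compactness argument. Let $m^*=\inf_{f\in S}H(f)$, which is finite by the lower bound above, and pick a minimizing sequence $f^{(n)}\in S$ with $H(f^{(n)})\to m^*$. Coercivity forces $\{f^{(n)}\}$ to remain in the bounded sublevel set $\{f\in S:H(f)\leq m^*+1\}$, so by the Bolzano--Weierstrass theorem a subsequence converges to some $g$; since $S$ is closed, $g\in S$, and continuity yields $H(g)=\lim_{n} H(f^{(n)})=m^*$. Hence $g$ is the desired minimizer. The one step needing genuine care is the coercivity estimate: one must verify that the positive linear term, made available precisely by the assumption $a_j^*\leq-\gamma<0$, overwhelms the negative logarithmic contribution from the resource terms. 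The remaining steps are routine applications of the extreme value principle on the (necessarily compact) sublevel sets of $H$.
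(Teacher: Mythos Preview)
Your proof is correct and follows essentially the same approach as the paper: the paper's proof merely states that ``since $a_j^*\leq -\gamma$, then $H(f)$ is dominated by the linear growth for $f$ large'' together with continuity (and convexity, which is in fact not needed for existence), whereas you spell out explicitly the coercivity estimate $H(f)\geq\gamma\|f\|_1-A\ln(\overline m+hK_M\|f\|_1)$ and the subsequent Bolzano--Weierstrass minimizing-sequence argument. Your version is simply a more detailed rendering of the same idea.
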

\begin{proof}
Since $a_j^* \leq -\gamma$, then $H(f)$ is dominated by the linear growth for $f$ large. We already knew $H(f)$ is continuous and globally convex. These facts ensure that $H(f)$ admits a minimizer.
\end{proof}
We are now ready to present the result about the ESD.
\begin{thm}\label{desdeu}
Assume (\ref{Asspd+}) holds, there exists a unique nonnegative ESD defined by (\ref{desd*}).
\end{thm}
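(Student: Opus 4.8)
The plan is to realize the ESD as the unique minimizer of the globally convex function $H$ and then to read off the steady-state equations (\ref{desd}) together with the ESD inequality (\ref{desd*}) from the first-order (complementarity) optimality conditions. Most of the machinery is already in place: Lemma \ref{MinE} furnishes a minimizer $\tilde f \in S = \{f \ge 0\}$ (coercivity coming from $a_j^* \le -\gamma < 0$, which forces the leading term of $H$ to grow linearly while the logarithmic term grows only like $\ln(\text{linear})$), and Lemma \ref{lem_equ} says that such a minimizer is characterized by (\ref{ESDe}), namely $\partial_{f_i}H(\tilde f)=0$ wherever $\tilde f_i>0$ and $\nabla_f H(\tilde f)\ge 0$ throughout. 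So the proof reduces to converting (\ref{ESDe}) into (\ref{desd})--(\ref{desd*}) and back.

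Given the minimizer $\tilde f$, I would define $\tilde R$ by the relation (\ref{rr}), $\tilde R_k = m_k R_k^*/(m_k + h\sum_j K_{jk}\tilde f_j)$. This choice makes the resource balance (\ref{desd}b) hold by construction, since (\ref{rr}) is precisely (\ref{desd}b) solved for $\tilde R_k$. The key algebraic observation linking the two formulations is the identity
$$
\partial_{f_i} H(\tilde f) = -\left[a_i^* + h\sum_{k=1}^N K_{ik}\tilde R_k\right] = -G_i(\tilde R),
$$
obtained by substituting (\ref{rr}) into the gradient formula displayed above and using $a_i^* = a_i - h\sum_k K_{ik}R_k^*$. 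With this identity the complementarity conditions translate directly: on the support $\{\tilde f_i>0\}$ the equation $\partial_{f_i}H=0$ reads $G_i(\tilde R)=0$, which is exactly the nontrivial case of (\ref{desd}a); off the support $\{\tilde f_i=0\}$ the inequality $\partial_{f_i}H\ge 0$ reads $G_i(\tilde R)\le 0$, which is exactly the ESD condition (\ref{desd*}) (and (\ref{desd}a) holds trivially there). Since $\tilde f\in\mathbb{R}^N$ the mass $h\sum_j\tilde f_j$ is automatically finite, so $(\tilde f,\tilde R)$ is a bona fide nonnegative ESD, which establishes existence.

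For uniqueness I would run the correspondence in reverse. If $(\tilde f,\tilde R)$ is any ESD, then (\ref{desd}b) forces $\tilde R$ to be given by (\ref{rr}), and the same identity converts (\ref{desd}a) and (\ref{desd*}) back into (\ref{ESDe}); by Lemma \ref{lem_equ} the vector $\tilde f$ is then a minimizer of $H$ over $S$. Because the nonsingularity hypothesis (\ref{KNS}) makes the Hessian $D^2H = MM^T$ positive definite at every point, $H$ is strictly convex, and a strictly convex function admits at most one minimizer over the convex set $S$. Hence $\tilde f$ is unique, and $\tilde R$ is then uniquely determined from it through (\ref{rr}).

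The only genuinely delicate point is the passage through the complementarity/KKT conditions of Lemma \ref{lem_equ}, and in particular making sure the gradient identity handles the two regimes $\tilde f_i>0$ and $\tilde f_i=0$ so that they match, respectively, the active equation of the steady-state system and the one-sided ESD inequality. Everything else is already secured: coercivity and continuity give existence of the minimizer, and positive definiteness of $D^2H$ under (\ref{KNS}) gives its uniqueness.
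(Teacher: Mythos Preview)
Your existence argument matches the paper's: both invoke Lemma~\ref{MinE} for a minimizer and Lemma~\ref{lem_equ} to convert the KKT conditions (\ref{ESDe}) into the ESD characterization (\ref{desd})--(\ref{desd*}); you simply spell out the gradient identity $\partial_{f_i}H(\tilde f)=-G_i(\tilde R)$ that the paper leaves implicit.

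For uniqueness, however, you take a genuinely different route. You run the correspondence backwards---any ESD yields a minimizer of $H$ over $S$---and then appeal to strict convexity of $H$ (via $D^2H=MM^\top$ positive definite under (\ref{KNS})) to conclude the minimizer, hence the ESD, is unique. The paper instead argues directly from the ESD inequalities: given two ESDs $(\tilde f,\tilde R)$ and $(\bar f,\bar R)$, it uses $\tilde f_j G_j(\tilde R)=0$, $G_j(\bar R)\le 0$ on $\{\tilde f_j>0\}$ (and symmetrically) to obtain $\sum_j(\tilde f_j-\bar f_j)[G_j(\bar R)-G_j(\tilde R)]\le 0$, then substitutes (\ref{rr}) to rewrite this as a nonpositive sum of squares, forcing $\sum_j K_{jk}(\tilde f_j-\bar f_j)=0$ for every $k$ and hence $\tilde R=\bar R$; finally (\ref{KNS}) yields $\tilde f=\bar f$. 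Your approach is cleaner and uses the variational structure to its full extent; the paper's computation is more hands-on and has a monotone-operator flavor, showing directly how the one-sided ESD inequalities interact without passing back through Lemma~\ref{lem_equ}. Both are correct and both ultimately invoke (\ref{KNS}) at the last step.
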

\begin{proof} The existence of an ESD can be deduced from Lemma \ref{lem_equ} and \ref{MinE}.
Here we prove the uniqueness using a contradiction argument similar to that for continuous model in \cite{MPW14}. Suppose that there are two nonnegative ESDs $(\tilde f, \tilde R)$ and $(\bar{f}, \bar{R})$
to (\ref{desd}),
then
\begin{subequations}
\begin{align}
&\sum\limits_{j = 1}^N \tilde f_{j}\left[G_j(\bar R)-G_j(\tilde R)\right]\leq 0,\\
&\sum\limits_{j = 1}^N \bar f_{j}\left[G_j(\tilde R)-G_j(\bar R)\right]\leq 0.
\end{align}
\end{subequations}
The sum of the two inequalities leads to
$$
\sum\limits_{j = 1}^N \left(\tilde f_{j}-\bar f_{j}\right)\left[G_j(\bar R)-G_j(\tilde R)\right]\leq 0.
$$
That  is
$$
h\sum\limits_{j = 1}^N\sum\limits_{k = 1}^N K_{jk}\left({\tilde f}_{j} - \bar f_{j} \right)\left(\bar R_{k} - \tilde R_{k} \right) \leq 0.
$$
Replacing $\bar R$ and $\tilde R$ using the expression (\ref{rr}), we have
$$
h\sum\limits_{j = 1}^N\sum\limits_{k = 1}^N K_{jk}\left(\tilde f_{j} - \bar{f}_{j} \right)\left(\frac{m_k R_k^{*}}{m_k + h\sum\nolimits_{l = 1}^N K_{lk}\bar f_{l}} - \frac{m_k R_k^{*}}{m_k+ h\sum\nolimits_{l = 1}^N K_{lk}\tilde f_l} \right) \leq 0.
$$
Upon rewriting we have
$$
\sum \limits_{k = 1}^N \frac{m_kR_k^{*}}{\left(m_k+ h\sum \nolimits_{l = 1}^NK_{lk}\bar f_{l} \right)\left(m_k + h\sum \nolimits_{l = 1}^N  K_{lk}\tilde f_{l} \right)}\left[h\sum\limits_{j = 1}^N K_{jk}\left(\tilde f_{j} - \bar f_{j} \right)\right]^2\leq 0,
$$
which implies that for every $1 \le k \le N$,
\begin{equation*}\label{nue}
\sum\limits_{j = 1}^N K_{jk}\left(\tilde f_{j} - \bar f_{j} \right)=0.
\end{equation*}
This when inserted into (\ref{rr}) yields
$$
{\tilde R_{k}} = {\bar R_{k}} \;\; \text{for} \;\; 1 \le k \le N.
$$
When the assumption (\ref{KNS}) is used we also have ${\tilde f_{j}} = {\bar{f} _{j}}$ for all $1 \le j \le N$.
The uniqueness of discrete ESD is thus proved.
\end{proof}
\subsection{Asymptotic towards the ESD} Here we consider the ESD $(\tilde f, \tilde R)$, which is a non-vanishing bounded steady state, and show that it is nonlinearly globally attractive based on the Lyapunov functional of the form
\begin{equation}\label{def}
S(t)= \sum\limits_{j = 1}^N \left[ -\tilde f_j\ln f_j(t)+f_j(t)\right] + \sum\limits_{k = 1}^N \left[-\tilde R_k\ln R_k(t)+R_k(t)\right].
\end{equation}
For this purpose it is  fundamental to assume that $S(0)$ is well-defined.  This can be ensured
if $(f^0, R^0)$ is not zero where $(\tilde f, \tilde R)$ is positive.

The main result is stated in the following theorem.
\begin{thm}\label{thmpde}
If (\ref{Asspd+}) and (\ref{KNS}) are satisfied, and $f_j(t)$ and $R_k(t)$ denote the solution to discrete model (\ref{rcsdc+}). Then,  \\
(i) $S$ is monotonically non-increasing with respect to time. Furthermore,
\begin{equation}\label{dft}
\frac{dS}{dt} \leq - \sum\limits_{k = 1}^N\frac{m_kR_k^{*}(R_k(t) - \tilde R_k)^2}{R_k(t)\tilde R_k}.
\end{equation}
(ii) Suppose that $(\tilde f, \tilde R)$ is a bounded ESD and that $S(0)$ is well-defined, then
$$
f_j(t)\to \tilde f_j, \quad R_k(t)\to \tilde R_k  \; \text{as}\;  t\to +\infty.
$$
\end{thm}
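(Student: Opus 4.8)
The idea is to read $S(t)$ as a relative-entropy Lyapunov functional and to argue in three stages: an exact dissipation identity (part (i)), square-integrability of the resource defect combined with Lemma \ref{lim0} (resource convergence in (ii)), and inversion of the nonsingular matrix $K$ (species convergence in (ii)).

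For part (i) I would differentiate $S$ along the flow, inserting $\dot f_j/f_j=G_j(R(t))$ from (\ref{rcsdc+}a) and $\dot R_k/R_k=m_k(R_k^*-R_k)/R_k-h\sum_j K_{jk}f_j$ from (\ref{rcsdc+}b); the species terms then collapse to $\sum_j(f_j-\tilde f_j)G_j(R(t))$. The decisive algebraic move is the splitting $G_j(R)=G_j(\tilde R)+h\sum_k K_{jk}(R_k-\tilde R_k)$. The first piece contributes $\sum_{\{j:\,\tilde f_j=0\}}f_jG_j(\tilde R)\le0$, where I use precisely the ESD inequality (\ref{desd*}) together with $G_j(\tilde R)=0$ on the support of $\tilde f$; the second piece is arranged to combine with the resource cross terms. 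After eliminating all remaining first-order contributions via the steady-state relation $h\sum_j K_{jk}\tilde f_j=m_k(R_k^*-\tilde R_k)/\tilde R_k$, the leftover quadratic terms regroup into the perfect square $-\sum_k m_kR_k^*(R_k-\tilde R_k)^2/(R_k\tilde R_k)$, which is exactly (\ref{dft}); monotonicity of $S$ follows.

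For the resource convergence in part (ii), I would first observe that $S$ is bounded below, since every summand of the form $x-c\ln x$ with $c\ge0$ has a finite minimum and $f_j,R_k$ are bounded above by Theorem \ref{thmpde0}. Hence the non-increasing $S(t)$ admits a limit, and integrating (\ref{dft}) yields $\int_0^\infty\sum_k(R_k-\tilde R_k)^2\,dt<\infty$ after bounding the denominator with $R_k\le C_R$ and $\tilde R_k>0$. Setting $Q(t)=\frac{1}{2}\sum_k(R_k-\tilde R_k)^2$, the uniform bounds of Theorem \ref{thmpde0} make $|Q'(t)|$ bounded, so Lemma \ref{lim0} forces $Q(t)\to0$, that is $R_k(t)\to\tilde R_k$ for each $k$.

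It remains to upgrade this to $f_j\to\tilde f_j$, which I regard as the main obstacle: square-integrability of $R_k-\tilde R_k$ does not control $\int_0^\infty(R_k-\tilde R_k)\,dt$, so one cannot simply integrate the $\ln f_j$ equation. Instead I would show that $\dot R_k$ is uniformly continuous, exactly as in the proof of Theorem \ref{extinctthm}; together with the existence of $\lim_{t\to\infty}R_k$ this forces $\dot R_k\to0$. Substituting into (\ref{rcsdc+}b) and using $R_k\to\tilde R_k>0$ gives $\sum_j K_{jk}f_j\to m_k(R_k^*-\tilde R_k)/(h\tilde R_k)=\sum_j K_{jk}\tilde f_j$, hence $\sum_j K_{jk}(f_j-\tilde f_j)\to0$ for every $k$. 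Because $K$ is nonsingular by (\ref{KNS}), this linear system can be inverted to conclude $f_j(t)\to\tilde f_j$. A side benefit of the Lyapunov structure is automatic: since $S$ stays finite, $-\tilde f_j\ln f_j$ cannot blow up, so $f_j$ is kept bounded away from $0$ wherever $\tilde f_j>0$, which legitimizes all the logarithmic manipulations.
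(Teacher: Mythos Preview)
Your proposal is correct and follows essentially the same route as the paper: the same dissipation identity via the splitting $G_j(R)=G_j(\tilde R)+h\sum_k K_{jk}(R_k-\tilde R_k)$ and the ESD inequality, the same lower bound on $S$, the same use of Lemma~\ref{lim0} on $Q(t)=\tfrac12\sum_k(R_k-\tilde R_k)^2$, and the same Barbalat-type argument that $\dot R_k\to 0$ borrowed from the proof of Theorem~\ref{extinctthm}. The only stylistic difference is in the last step: the paper extracts a convergent subsequence of $f(t)$, identifies its limit via (\ref{rcsdc+}b) and (\ref{KNS}), and then invokes uniqueness of the ESD to upgrade to full convergence; you instead pass to the limit in (\ref{rcsdc+}b) directly to obtain $K^{\!\top}(f(t)-\tilde f)\to 0$ and invert, which is slightly cleaner and avoids the subsequence detour.
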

Our analysis is inspired by the proof for continuous model in \cite{MPW12}.
This indicates that the discrete system considered in this paper is quite special, in general,
ESDs are not always Convergent Stable Distributions (see \cite{DO04, MPW12, R12} and the mentioned references there).
\begin{proof}
(i) It can be deduced from (\ref{rcsdc+}) that
\begin{align}
\label{dSdte}
 \frac{dS}{dt}
&= \sum\limits_{j = 1}^N \frac{(f_j)_t(f_j-\tilde f_j)}{f_j}+
\sum\limits_{k = 1}^N \frac{(R_k)_t(R_k-\tilde R_k)}{R_k}\notag\\
&= \sum\limits_{j = 1}^N(f_j-\tilde f_j)G_j(R)+ \sum\limits_{k = 1}^N(R_k-\tilde R_k)
\left[m_k\left(\frac{R_k^*}{R_k}-1\right)-h\sum\limits_{j = 1}^N K_{jk}f_j\right]\notag\\
&=\sum\limits_{j = 1}^N(f_j-\tilde f_j)\left[G_j(R)-G_j(\tilde R)\right]+ \sum\limits_{j = 1}^Nf_jG_j(\tilde R)\notag\\
&+ \sum\limits_{k = 1}^N(R_k-\tilde R_k)
\left[m_k\left(\frac{R_k^*}{R_k}-\frac{R_k^*}{\tilde R_k}\right)-h\sum\limits_{j = 1}^N K_{jk}\left(f_j-\tilde f_j\right)\right]\notag\\
&=-\sum\limits_{k = 1}^N\frac{m_kR_k^*}{R_k\tilde R_k}(R_k-\tilde R_k)^2+\sum\limits_{j = 1}^Nf_jG_j(\tilde R)\notag\\
&\leq -\sum\limits_{k = 1}^N\frac{m_kR_k^*}{R_k\tilde R_k}(R_k-\tilde R_k)^2,
\end{align}
{where we used $\tilde f_jG_j(\tilde R)=0$ and (\ref{desd}b) in the third equality, together with the definition of the ESD to obtain the last inequality. Thus, we have established the entropy dissipation property. }\\
(ii) From (\ref{dSdte}), we see that $S(t)$ is strictly decreasing in $t$ unless $R_k=\tilde R_k$. Hence $\lim_{t\to \infty} S(t)$ exists
because $S$ is bounded from below. In fact, for bounded $(\tilde f, \tilde R)$, we have
\begin{align*}
S(t)&= \sum\limits_{j = 1}^N \left[ -\tilde f_j\ln \left(\frac{f_j}{\tilde f_j}\right)+f_j+\tilde f_j\ln \tilde f_j\right] + \sum\limits_{k = 1}^N \left[-\tilde R_k\ln\left(\frac{R_k}{\tilde R_k}\right)+R_k+\tilde R_k\ln \tilde R_k\right]\\
&\geq \sum\limits_{j = 1}^N \left[ -\tilde f_j\left(\frac{f_j}{\tilde f_j}-1\right)+f_j+\tilde f_j\ln \tilde f_j\right]+\sum\limits_{k = 1}^N \left[-\tilde R_k\left(\frac{R_k}{\tilde R_k}-1\right)+R_k+\tilde R_k\ln \tilde R_k\right]\\
&=\sum\limits_{j = 1}^N \left[\tilde f_j+\tilde f_j\ln \tilde f_j\right]+
\sum\limits_{k = 1}^N \left[\tilde R_k+\tilde R_k\ln \tilde R_k\right].
\end{align*}
 Set
$$
Q(t)=\frac{1}{2}\sum\limits_{k = 1}^N(R_k-\tilde R_k)^2.
$$
From (\ref{dSdte}) we obtain
$$
\frac{d}{dt} S(t) \leq  -C Q(t), 
$$
where
$$
C=\frac{ 2 \underline{m}\min_{1\leq k\leq N}R_k^*}{\|R\|_\infty\|\tilde R\|_\infty}.
$$
The above inequality upon integration over $(0, \infty)$ gives
$$
\int_0^\infty Q(t)dt \leq \frac{1}{C} \left[S(0)-S(\infty)\right]< \infty.
$$
Similar to the proof of Theorem \ref{extinctthm}, we can show that
$$
|Q'(t)|<\infty.
$$
These together guarantee that $\lim_{t\to \infty}Q(t)=0$, hence
$$
\lim_{t \to \infty}R_k(t)=\tilde R_k.
$$
{Using the same technique as  that in the proof of Theorem \ref{extinctthm} we can deduce}
$$
\lim_{t\rightarrow\infty}\frac{d}{dt}R_k(t)=0.
$$
Furthermore, since $\|f(t)\|_1$ is uniformly bounded in $t$, there exists a subsequence
such that
$$
\lim_{l\rightarrow\infty}\|f(t_l)\|_1=\|\bar f\|_1.
$$
These when combined with (\ref{rcsdc+}b) lead to
$$
\tilde R_k=\frac{m_kR_k^{*}}{m_k+h\sum_{j = 1}^N K_{jk}\bar f_j}.
$$
Comparing with the definition of ESD, we have
$$
\sum_{j = 1}^N K_{jk}\bar f_j=\sum_{j = 1}^N K_{jk}\tilde f_j.
$$
This when using (\ref{KNS}) gives $\bar f_j=\tilde f_j$. The strong convergence follows since the ESD is unique.
\end{proof}
\begin{rem}
\red{
Results on the steady state characterization and asymptotic convergence analysis in this section are extensible to systems with a more general resource equation
$$
 \frac{d}{dt}{R_k}(t)= J_k(R_k) -  h R_k \sum\limits_{j = 1}^N  K_{jk}{f_j}(t),
 $$
 as long as
 $$
 \sigma_k(u)=\frac{J_k(u)}{u}
 $$
 is strictly decreasing in $u \in (0, \infty)$. The $H$-function investigated in Section \ref{sec4.1} needs to be replaced by
 $$
H(f)=-\sum\limits_{j=1}^N a_j^*f_j - \sum\limits_{k=1}^N F_k\left(h\sum_{j = 1}^NK_{jk}f_j\right),
$$
where the function $F_k$  is determined by $F_k'=\sigma_k^{-1}$. This is a global convex function since $F_k''=\frac{1}{\sigma_k'}<0$
and
$$
D^2H=MM^\top, \quad M=(M_{jk})=\left( h\sqrt{-F_k''\left(h\sum_{j = 1}^N K_{jk}f_j\right)}K_{jk}\right)
$$
with $M$ being non-singular. It is remarkable that asymptotic convergence to steady states with $\tilde R_k>0$
can be established using the same  Lyapunov function (\ref{def}). For system (\ref{rcsdc+}), $\sigma_k(u)=m_k(\frac{R_k^*}{u}-1)$, and for the MacArthur system of form (\ref{mac1+}),
$$
\sigma_k(u)=m_k\left(1- \frac{u}{R_k^*}\right).
$$
Note that for the MacArthur system (\ref{mac1+}), $\tilde R_k$ can vanish for some $k\in \{1, \cdots, N\}$.
 }
\end{rem}

\section{Time discretization} In this section, we introduce and study the fully discrete scheme of form
\begin{subequations}\label{fimfs-}
\begin{align}
\frac{f_j^{n + 1} - f_j^n}{\Delta t}& = f_j^{n + 1}\left[a_j+h\sum_{k = 1}^N K_{jk}(R_k^{n+1}-R_k^{*})\right],\\
\frac{R_k^{n + 1} - R_k^n}{\Delta t}& = m_k(R_k^{*}- R_k^{n+1})- hR_k^{n+1}\sum_{j = 1}^N K_{jk}f_j^{n+1}.
\end{align}
\end{subequations}
We  can show that the solutions to such scheme satisfy the positivity and entropy dissipation property,
with the discrete entropy defined by
\begin{equation}\label{FDF^n}
S^n = \sum\limits_{j = 1}^N\left(-\tilde f_j\ln f_j^n+f_j^n\right)+ \sum\limits_{k = 1}^N\left(-\tilde R_k\ln R_k^n+R_k^n\right).
\end{equation}
\begin{thm}\label{thm_de2-}
{If (\ref{Asspd+}) holds, and $f^n$ and $R^n$ represent the numerical solution to the scheme (\ref{fimfs-}). Then}\\
(i) there exists $\mu_0>0$ such that if
\begin{align}\label{Hdelttc-}
\Delta t <\mu_0,
\end{align}
then, $f_j^{n+1}=0$ for $f_j^n=0$, $f_j^{n+1}>0$ for $f_j^n>0$, and $R_k^{n+1}>0$ when $R_k^n\geq 0$ for any $1\leq k \leq N$ and $n\in \mathbb{N}$.
Moreover, for any $n\in \mathbb{N}$,
$$
\|f^n\|_1+\|R^n\|_1 \leq \|f^0\|_1+\|R^0\|_1 +\frac{\overline m\|R^*\|_1}{\min\left\{\gamma, \underline{m}\right\}}.
$$
(ii) {$S^n$ decreases with respect to $n$. In addition,}
\begin{equation}\label{dft+-}
S^{n+1}-S^{n}\leq -\Delta t \sum\limits_{k=1}^{N}\frac{m_kR_k^{*}(R_{k}^{n+1}-\tilde R_{k})^2}{R_{k}^{n+1}\tilde R_{k}}.
\end{equation}
\end{thm}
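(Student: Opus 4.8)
The plan is to treat (\ref{fimfs-}) as a fully implicit (backward Euler) system and to read off from each equation an explicit rational form of the update, from which positivity, the mass bound, and the entropy inequality all follow. First I would rewrite (\ref{fimfs-}a) as
$$f_j^{n+1}\left(1 - \Delta t\, G_j(R^{n+1})\right) = f_j^n, \qquad G_j(R^{n+1}) = a_j + h\sum_{k=1}^N K_{jk}(R_k^{n+1} - R_k^*),$$
and (\ref{fimfs-}b) as
$$R_k^{n+1}\left(1 + \Delta t\, m_k + \Delta t\, h\sum_{j=1}^N K_{jk}f_j^{n+1}\right) = R_k^n + \Delta t\, m_k R_k^*.$$
Before reading off signs I must guarantee that the coupled implicit system admits a solution. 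I would do this by a fixed-point argument in the resource variable: on the box $B = \{R : 0 \le R_k \le C_R\}$ with $C_R = \max_k\{R_k^*, R_k^0\}$, choose $\mu_0$ so that $1 - \Delta t\, G_j(R) > 0$ for every $R \in B$ (possible since $G_j$ is bounded above on $B$), define $f_j(R) = f_j^n/(1 - \Delta t\, G_j(R)) \ge 0$, and let $T(R)_k$ be the right side of the $R$-update with $f_j^{n+1}$ replaced by $f_j(R)$. Since $T(R)_k \le (R_k^n + \Delta t\, m_k R_k^*)/(1 + \Delta t\, m_k)$ is a convex combination of $R_k^n$ and $R_k^*$, hence $\le C_R$, the continuous map $T$ sends $B$ into itself and Brouwer's theorem yields a fixed point.

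Positivity is then immediate from the two displayed identities: $f_j^n = 0$ forces $f_j^{n+1} = 0$, while $f_j^n > 0$ gives $f_j^{n+1} > 0$ because the denominator is positive, and $R_k^n \ge 0$ gives $R_k^{n+1} > 0$ since the numerator $R_k^n + \Delta t\, m_k R_k^* > 0$. For the mass bound I would sum both equations over $j$ and $k$; the coupling terms $\pm h\sum_{j,k}K_{jk}f_j^{n+1}R_k^{n+1}$ cancel exactly, leaving, with $M^n = \|f^n\|_1 + \|R^n\|_1$ and $m_0 = \min\{\gamma, \underline m\}$,
$$\frac{M^{n+1} - M^n}{\Delta t} = \sum_{j=1}^N a_j^* f_j^{n+1} + \sum_{k=1}^N m_k(R_k^* - R_k^{n+1}) \le -m_0 M^{n+1} + \overline{m}\,\|R^*\|_1,$$
using $a_j^* \le -\gamma$ and $f_j^{n+1}, R_k^{n+1} \ge 0$. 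This rearranges to $M^{n+1} \le (M^n + \Delta t\,\overline m\|R^*\|_1)/(1 + \Delta t\, m_0)$, and a one-line discrete Gr\"onwall induction (exactly mirroring the continuous estimate in Theorem \ref{thmpde0}) delivers the stated uniform bound.

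For part (ii) the key tool is the discrete chain-rule inequality coming from concavity of $\ln$, namely $\ln a - \ln b \ge (a-b)/a$, which gives $-\tilde f_j(\ln f_j^{n+1} - \ln f_j^n) \le -\tilde f_j (f_j^{n+1} - f_j^n)/f_j^{n+1}$ and the analogue for $R$. Substituting the scheme, so that $(f_j^{n+1} - f_j^n)/f_j^{n+1} = \Delta t\, G_j(R^{n+1})$ and the corresponding identity for $R_k$, I obtain
$$\frac{S^{n+1} - S^n}{\Delta t} \le \sum_{j=1}^N (f_j^{n+1} - \tilde f_j)\,G_j(R^{n+1}) + \sum_{k=1}^N (R_k^{n+1} - \tilde R_k)\left[m_k\left(\frac{R_k^*}{R_k^{n+1}} - 1\right) - h\sum_{j=1}^N K_{jk}f_j^{n+1}\right].$$
This is precisely the right-hand side of the continuous computation in Theorem \ref{thmpde}(i) with $(f,R)$ replaced by $(f^{n+1}, R^{n+1})$. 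Repeating that regrouping verbatim, and invoking $\tilde f_j G_j(\tilde R) = 0$, the steady relation $\tilde R_k = m_k R_k^*/(m_k + h\sum_j K_{jk}\tilde f_j)$, and the ESD inequality (\ref{desd*}) in the form $f_j^{n+1}G_j(\tilde R) \le 0$, collapses the right side to $-\sum_{k=1}^N \frac{m_k R_k^*}{R_k^{n+1}\tilde R_k}(R_k^{n+1} - \tilde R_k)^2$, which is (\ref{dft+-}) and forces $S^n$ to be non-increasing.

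The main obstacle I anticipate is the existence step for the coupled implicit system together with a choice of $\mu_0$ that is uniform in $n$; the remaining parts are faithful discrete transcriptions of arguments already carried out for the continuous flow. The cleanest route is to confine the fixed-point map to the invariant box $B$, which simultaneously supplies the a priori bound $R_k^{n+1} \le C_R$ needed to fix $\mu_0$ and to control $G_j(R^{n+1})$, so that the rational update formulas are well-defined and positivity-preserving throughout the induction.
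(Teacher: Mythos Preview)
Your proposal is correct and follows essentially the same route as the paper: rewrite each implicit equation in rational form, read off positivity, sum to get the mass recursion $M^{n+1}\le (M^n+\Delta t\,\bar m\|R^*\|_1)/(1+\Delta t\,m_0)$, and for (ii) use concavity of $\ln$ to reduce $S^{n+1}-S^n$ to exactly the expression handled in the proof of Theorem~\ref{thmpde}(i) at the time level $n+1$.

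The one genuine addition in your write-up is the Brouwer fixed-point argument for solvability of the coupled implicit step, which the paper does not address; the paper simply assumes a solution $(f^{n+1},R^{n+1})$ exists and verifies a posteriori that $1-\Delta t\,G_j^{n+1}>0$. Your approach also pins down $\mu_0$ via the componentwise invariant bound $R_k\le C_R=\max_k\{R_k^*,R_k^0\}$ on the fixed-point box, whereas the paper instead bounds $G_j^{n+1}$ through the aggregate mass estimate $\|R^{n+1}\|_1\le \tilde M^0:=M^0+\bar m\|R^*\|_1/\min\{\gamma,\underline m\}$ and sets $\mu_0=1/(K_M\tilde M^0-\gamma)_+$. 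Both choices work; yours has the advantage of decoupling the existence step from the mass bound and thereby avoiding the mild circularity in the paper's induction (where nonnegativity of $f^{n+1}$ is used to derive the very mass bound that certifies $1-\Delta t\,G_j^{n+1}>0$).
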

\begin{proof}Set
$$
G^n_j:=a_j + h\sum_{k = 1}^N K_{jk}(R_k^{n}-R_k^{*}).
$$
(i) From (\ref{fimfs-}a), i.e.,
$$
f_j^{n+1}(1-\Delta t G_j^{n+1})=f_j^n.
$$
we have
\begin{equation}\label{semi-implicit-}
f_j^{n+1}=\frac{f_j^{n}}{1- \Delta t G_j^{n+1}},
\end{equation}
if $\Delta t G_j^{n+1}<1$ is satisfied. Hence the claimed non-negativity for $f_j^n$ follows. This when combined with (\ref{fimfs-}b), that is,
\begin{equation}\label{Rkn1-}
\left(1+\Delta tm_k+\Delta t h\sum\limits_{j=1}^NK_{jk}f_j^{n+1}\right)R_k^{n+1}=R_k^n+\Delta tm_kR_k^{*},
\end{equation}
ensures the positivity for $R_k^n$ for any $n>0$ since $R_k^{*} >0$ and $R_k^0\geq 0$.

What is left is to bound $G_j^{n+1}$ so as to obtain a reasonable limitation on the time step $\Delta t$. To proceed, we let
$$
M^n=\|f^n\|_1+\|R^n\|_1=\sum_{j = 1}^N f_j^{n}+\sum_{k = 1}^N R_k^{n},
$$
then summation of (\ref{fimfs-}a) and (\ref{fimfs-}b) over $j,\;k$, and against $\Delta t h$, gives
\begin{align}
\label{fimL1}
M^{n+1}&=M^n + \Delta t \sum_{j = 1}^Na_j^* f_j^{n+1}+\Delta t \sum_{k = 1}^Nm_k(R_k^{*}-R_k^{n+1})\notag\\
&\leq M^n-\Delta t\beta M^{n+1}+\Delta t\bar{m} \|R^*\|_1,
\end{align}
where $\beta=\min\left\{\gamma, \underline{m}\right\}$. That is
$$
M^{n+1}\leq(1+\Delta t\beta)^{-1}M^{n}+\Delta t\overline m\|R^*\|_1\cdot(1+\Delta t\beta)^{-1}.
$$
By induction
\begin{align*}
M^{n+1} & \leq\left(1+\Delta t\beta\right)^{-(n+1)}M^0+\Delta t\overline m\|R^*\|_1\sum\limits_{i=1}^{n+1}(1+\Delta t\beta)^{-i}\\
& \leq 
\tilde M^0:=M^0+\frac{\overline m\|R^*\|_1}{\beta}.
\end{align*}
This proves the claimed  upper bound of the total bio-mass.
Note that
$$
G_j^n \leq -\gamma +K_M \|R^n\|_1 \leq \tilde M^0 K_M -\gamma.
$$
Hence, it suffices to choose
$$
\mu_0=\frac{1}{(K_M\tilde M^0 -\gamma)_+}.
$$
(ii) Using $\ln X \leq X-1$ for $X>0$, we proceed to estimate:
\begin{align}\label{hnhn1}
S^{n+1}-S^n&= \sum\limits_{j = 1}^N\left(\tilde f_j\ln\frac{f_j^n}{f_j^{n+1}}+f_j^{n+1}-f_j^n\right)
+\sum\limits_{k = 1}^N\left(\tilde R_k\ln\frac{R_k^n}{R_k^{n+1}}+R_k^{n+1}-R_k^n\right)\notag\\
&\leq \sum\limits_{j = 1}^N \left[\tilde f_j\left(\frac{f_j^n}{f_j^{n+1}}-1\right)+f_j^{n+1}-f_j^n\right]\notag\\
&\quad+\sum\limits_{k = 1}^N\left[\tilde R_k\left(\frac{R_k^n}{R_k^{n+1}}-1\right)+R_k^{n+1}-R_k^n\right]\notag\\
&= \sum\limits_{j =1}^N\frac{(f_j^{n+1}-f_j^n)(f_j^{n+1}-\tilde f_j)}{f_j^{n+1}}
+ \sum\limits_{k =1}^N\frac{(R_k^{n+1}-R_k^n)(R_k^{n+1}-\tilde R_k)}{R_k^{n+1}}\notag\\
&=-\Delta t \sum\limits_{k = 1}^N\frac{m_kR_k^*}{R_k^{n+1}\tilde R_k}(R_k^{n+1}-\tilde R_k)^2+\Delta t \sum\limits_{j = 1}^Nf_j^{n+1}G_j(\tilde R)\notag\\
&\leq -\Delta t \sum\limits_{k = 1}^N\frac{m_kR_k^*}{R_k^{n+1}\tilde R_k}(R_k^{n+1}-\tilde R_k)^2,
\end{align}
where we have used the same technique as in the proof of Theorem \ref{thmpde} after the third equality.
Thus, (\ref{dft+-}) is proved.
\end{proof}

Finally, we discuss how to implement the fully-discrete scheme (\ref{fimfs-}). Since it is an implicit time-marching scheme, we apply a simple  iteration method to (\ref{fimfs-}) in order to update the numerical solution. From $(f^n, R^n)$, we obtain
$(f^{n+1}, R^{n+1})$ as follows:

Set $ R^{n+1, 0}=R^n$, then find $(f^{n+1, m}, R^{n+1,m+1})$ by iteratively solving
\begin{subequations}\label{ffs-}
\begin{align}
\frac{f_j^{n + 1, m} - f_j^n}{\Delta t}& = f_j^{n + 1, m}\left[a_j+h\sum_{k = 1}^N K_{jk}(R_k^{n+1, m}-R_k^{*})\right],\\
\frac{R_k^{n + 1, m+1} - R_k^n}{\Delta t}& = m_k(R_k^{*}- R_k^{n+1, m+1})- hR_k^{n+1, m+1}\sum_{j = 1}^N K_{jk}f_j^{n+1, m},
\end{align}
\end{subequations}
with $m=0, 1, 2, \cdots $. If for some $L$ such that
\[\|R^{n+1, L+1}-R^{n+1, L}\|\leq \delta,\]
with some tolerance $\delta$, then
\[
\quad R^{n+1}=R^{n+1, L+1}, \quad f^{n+1}=f^{n+1, L}.
\]
The above iteration tells that we can obtain the numerical solution by directly using the semi-implicit scheme
\begin{subequations}\label{ffs}
\begin{align}
\frac{f_j^{n + 1} - f_j^n}{\Delta t}& = f_j^{n + 1}\left[a_j+h\sum_{k = 1}^N K_{jk}(R_k^{n}-R_k^{*})\right],\\
\frac{R_k^{n + 1} - R_k^n}{\Delta t}& = m_k(R_k^{*}- R_k^{n+1})- hR_k^{n+1}\sum_{j = 1}^N K_{jk}f_j^{n+1},
\end{align}
\end{subequations}
This scheme as linear in $f^{n+1}$ and $R^{n+1}$ is easy to compute. In next section we will present some numerical examples using this simple scheme.

\section{Numerical examples}
{ In this section, we present numerical examples to verify the theoretical results. As a comparison, an independent algorithm is given to compute the discrete ESD: this is based on the established result that the discrete ESD can be obtained from solving the nonlinear optimization problem.
We will adopt the Matlab code {\sl fmincon.m} to execute the sqp algorithm.}\\

\noindent{\bf Example 1 (Numerical solutions with data (\ref{apn}))}\\
Our first tests are designed for problems with constant $m(y)$, both the resource-supply distribution and the resource-consumption kernel are gaussians:
\begin{equation}\label{coefKR}
R^*(y) = \frac{1}{\sqrt{2\pi}\sigma_*}\exp(-y^2/(2\sigma_*^2)),\quad
K(x, y) = \frac{1}{\sqrt{2\pi}\sigma_K}\exp\left( -(x-y)^2/(2\sigma_K^2)\right).
\end{equation}
The  growth rate is chosen as
\begin{equation}\label{apn}
a(x)=-2x^2+0.5,
\end{equation}
which has the maximum point $x=0$. We take $\sigma_*=0.1$, $\sigma_K=0.2$, $m(y)=1$ here.

The initial data
\begin{equation}\label{infr}
  f^0(x)=\frac{5}{\sqrt{2\pi}}\exp(-x^2/2),\quad R^0(y)= R^*(y).
\end{equation}
{Fig.1 depicts how numerical solutions evolve toward ESD over time. The numerical results with initial data (\ref{infr}) show that the initial consumer species  branch into two subspecies. These are consistent with the results in \cite{MPW12} where the solution goes through a dimorphic branching.}
\begin{center}
\resizebox{5cm}{4cm}{\includegraphics{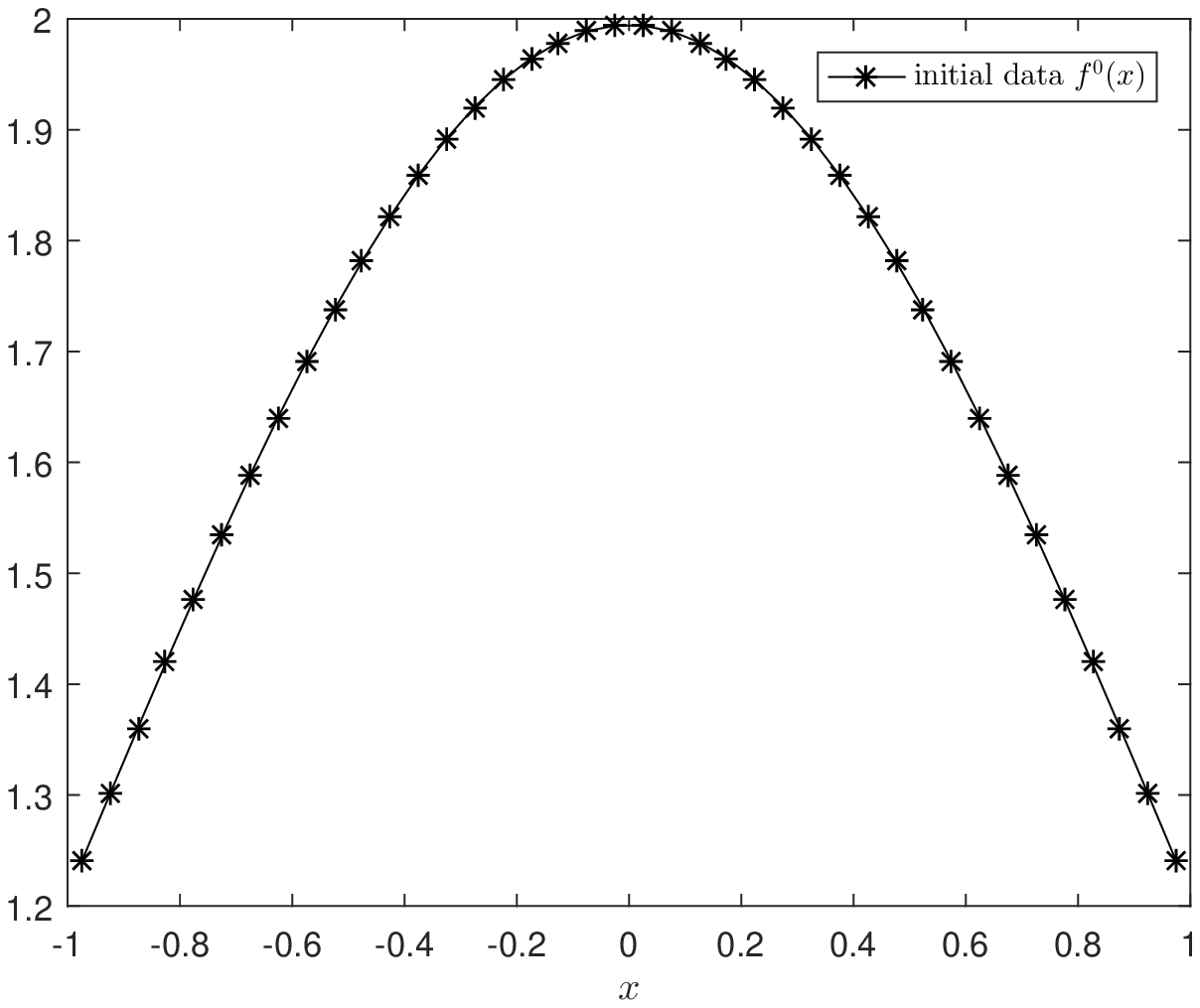}}\;
\resizebox{5cm}{4cm}{\includegraphics{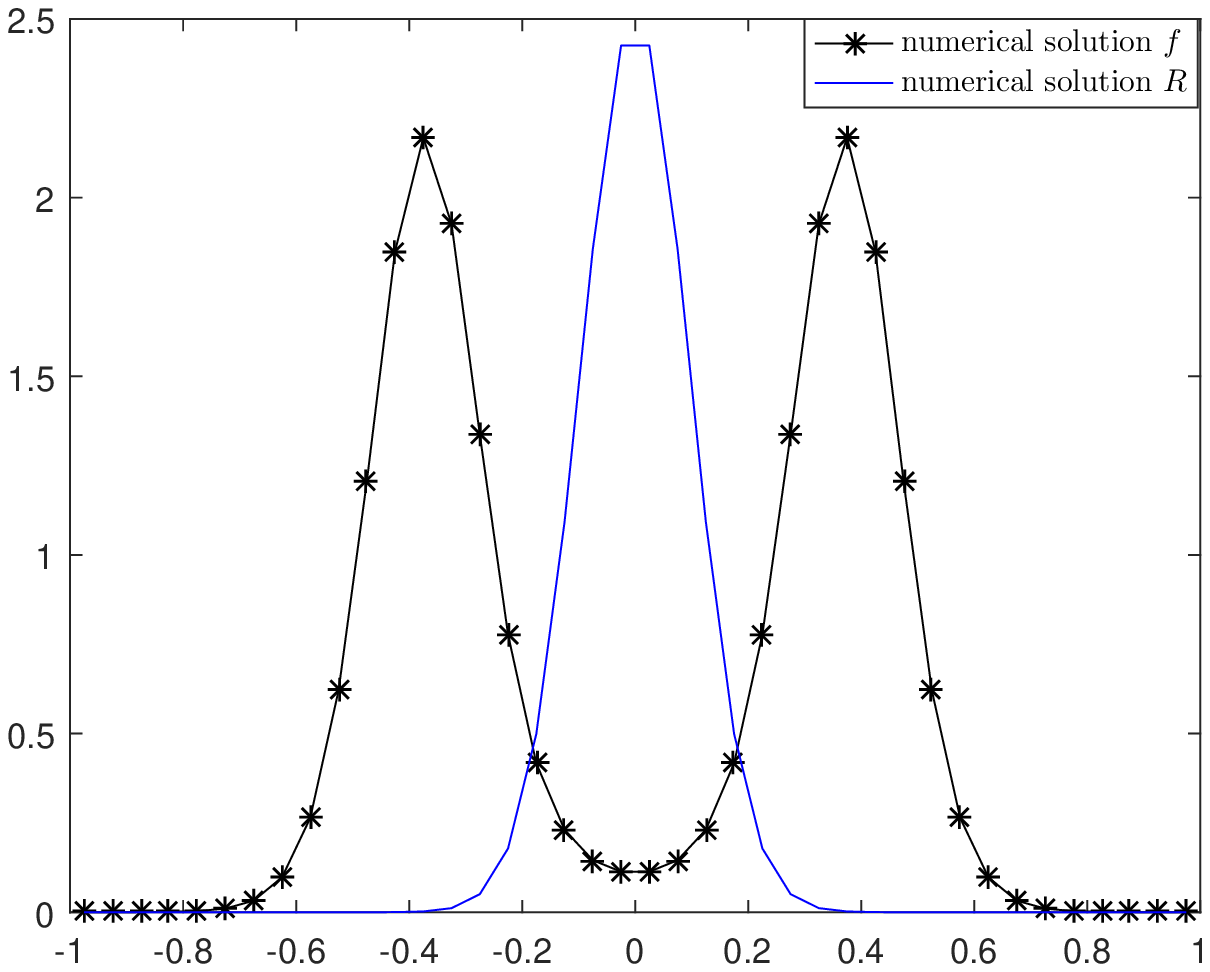}}\;
\resizebox{5cm}{4cm}{\includegraphics{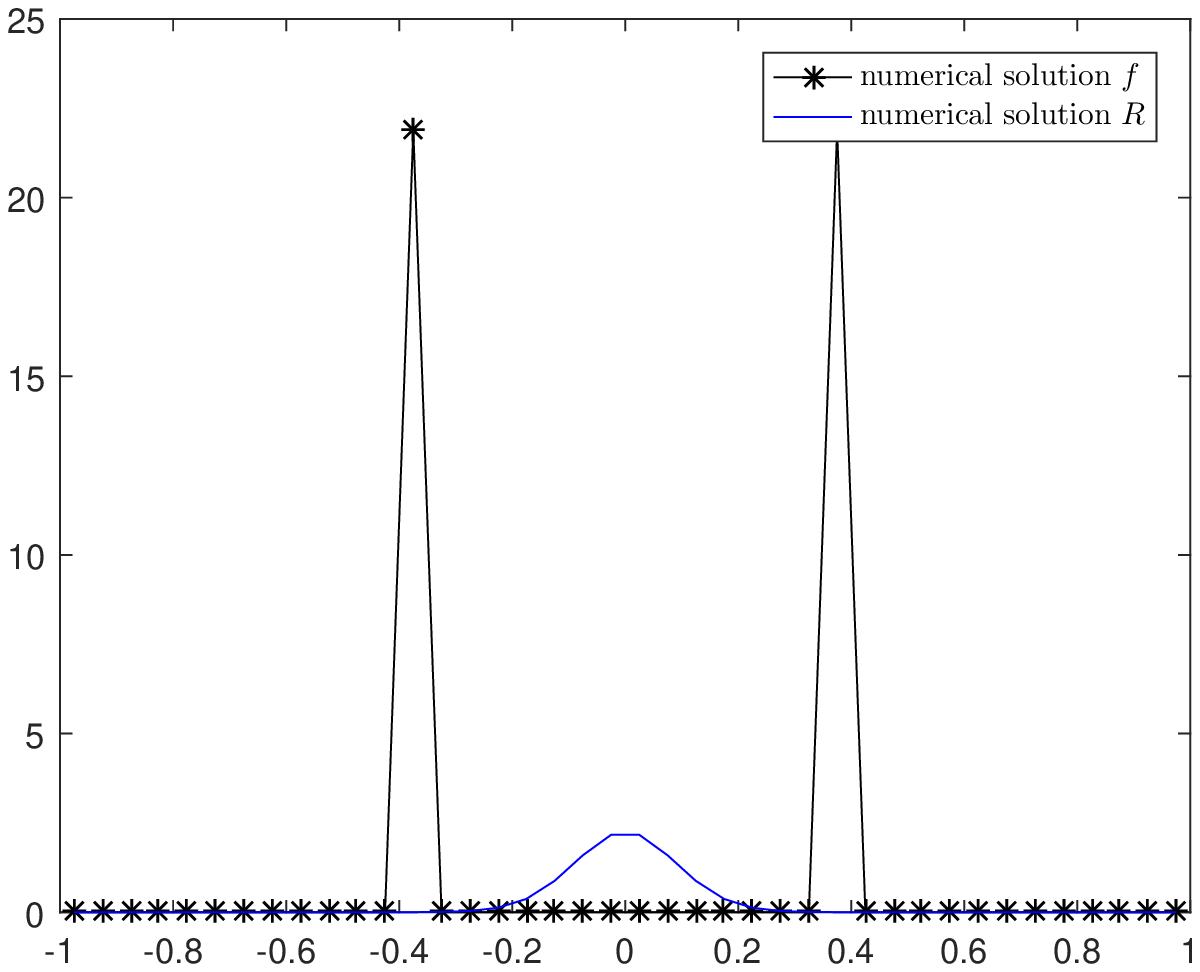}}
\end{center}

\begin{center}
\resizebox{5cm}{4cm}{\includegraphics{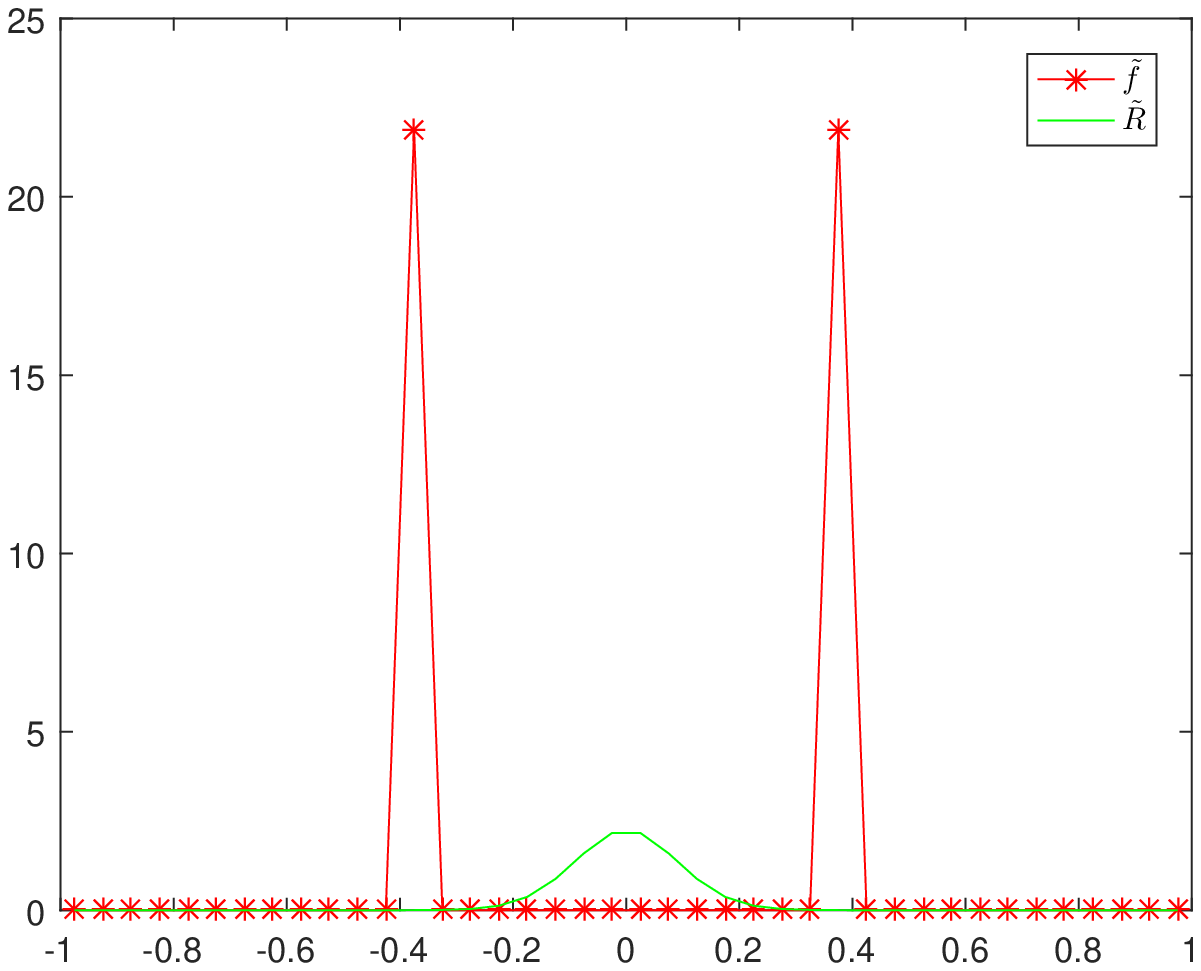}}\;
\resizebox{5cm}{4cm}{\includegraphics{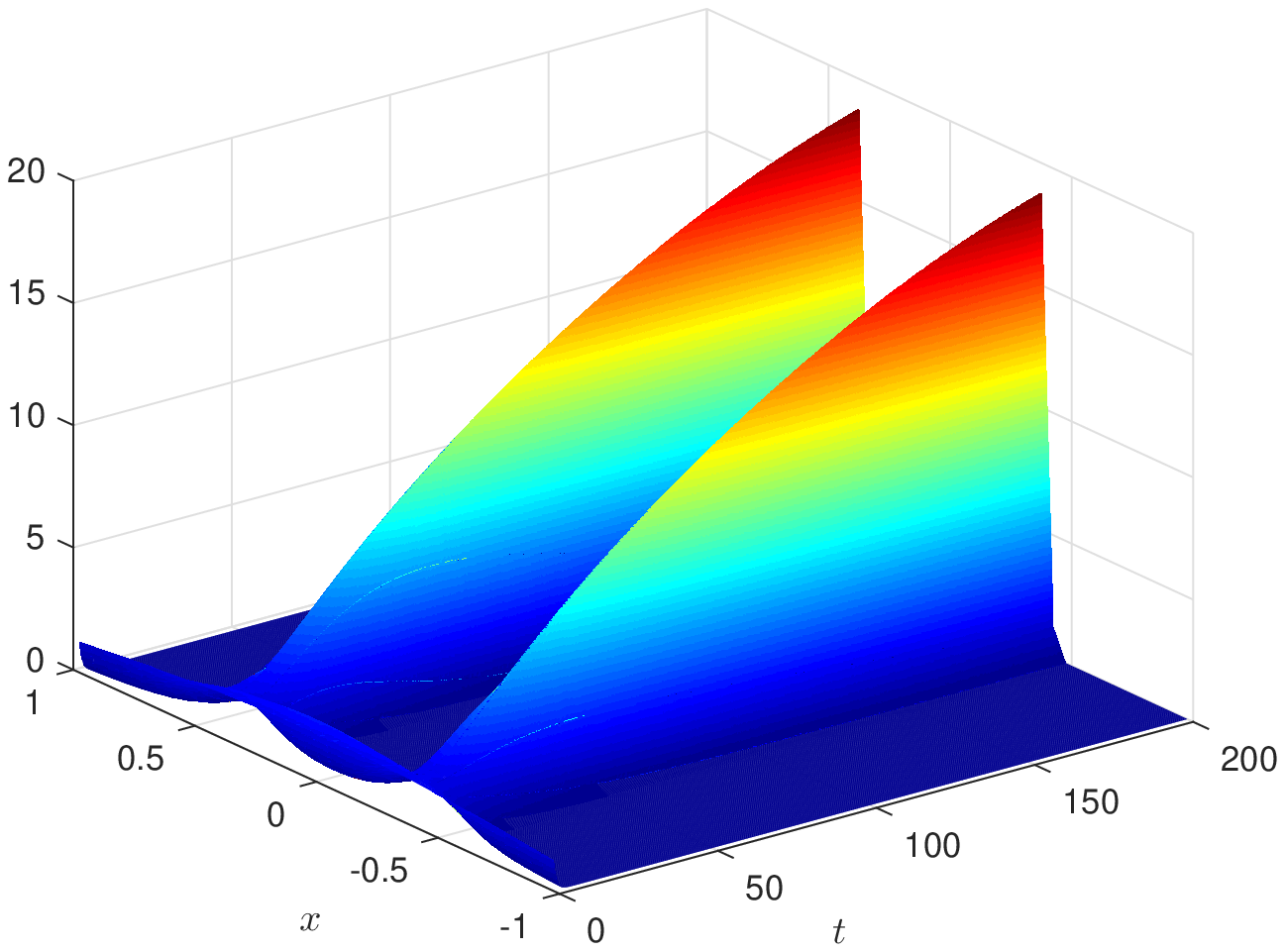}}
\begin{center}
\begin{minipage}[b]{15cm}
\begin{center}
\noindent\footnotesize \textbf{Fig.1} \ \ Numerical solutions of (\ref{ffs}) tend to the ESD, $N=40$ and $\Delta t =0.4$. The first line:
initial data $f^0(x)$ (left); $T=10$ (middle); $T = 3000$ (right). The second line: ESD (left); The variation of $f$ with time, $T \in [0, 200]$ (right).
\end{center}
\end{minipage}
\end{center}
\end{center}

\noindent{\bf Example 2 (Numerical solutions with data (\ref{an}))}\\
The selected initial value
\begin{equation}\label{infr2}
f^0(x)=\sin(100x)+1,\quad R^0(y)= 1.
\end{equation}
\begin{equation}\label{an}
a(x)=-2x^2.
\end{equation}
{For this nonpositive $a(x)$ and the oscillation initial data, we test the time-asymptotic convergence of numerical solutions to ESD. The results shown in Fig.2 are in line with expectations: when $a(x)<0$, the solution gets extinct. This is also in accordance with the conclusion of Theorem 3.2.}
\begin{center}
\resizebox{5cm}{4cm}{\includegraphics{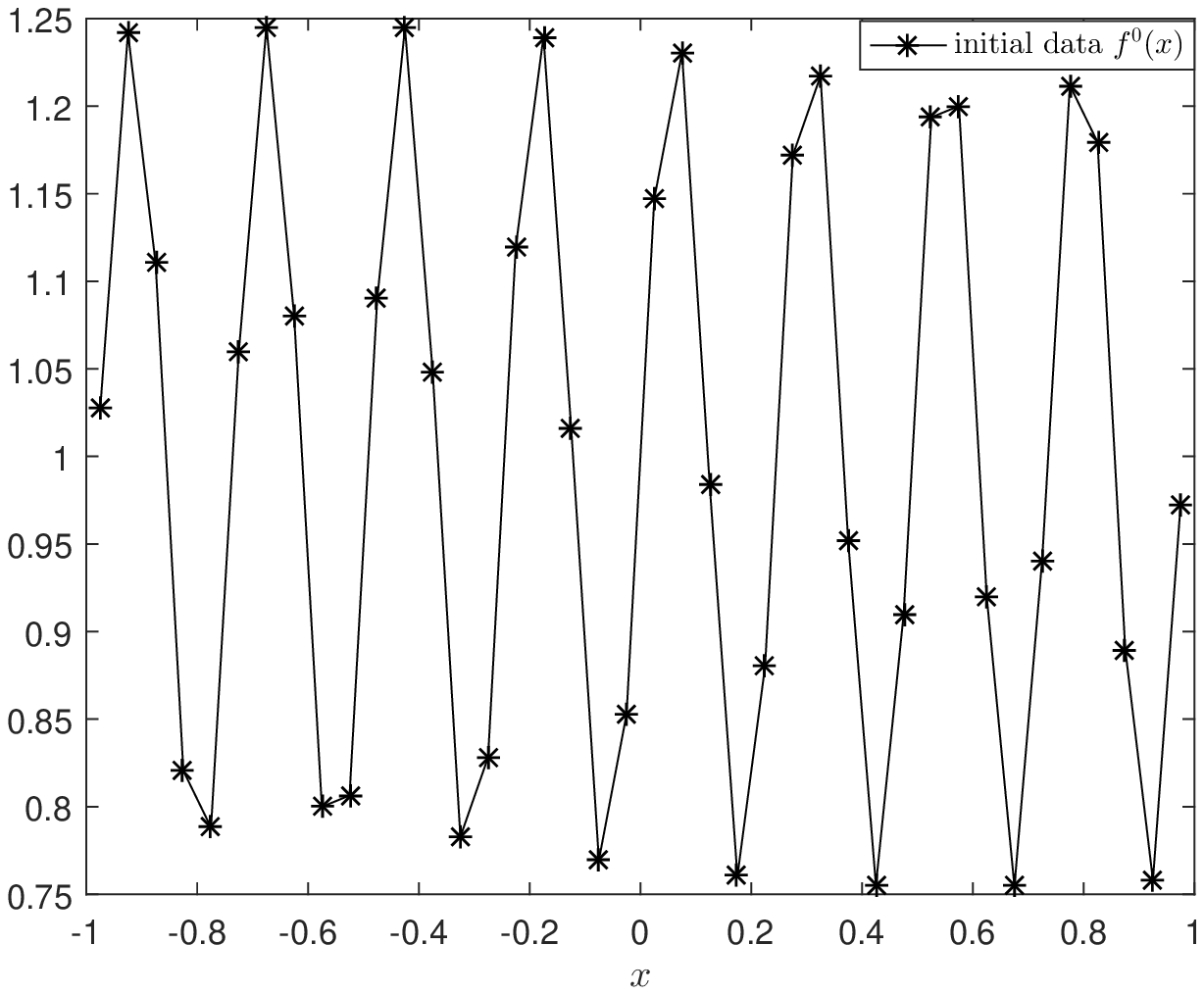}}\;
\resizebox{5cm}{4cm}{\includegraphics{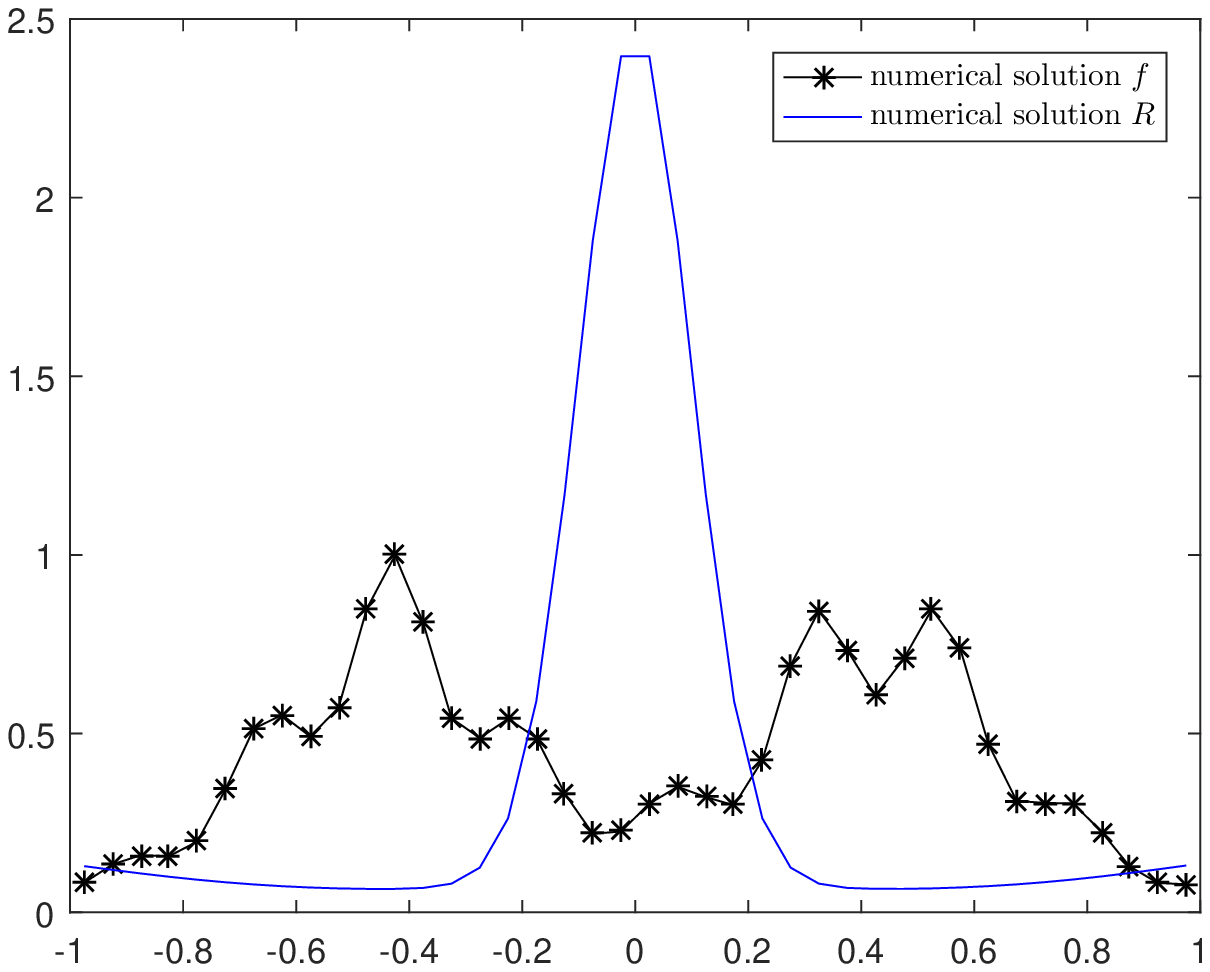}}\;
\resizebox{5cm}{4cm}{\includegraphics{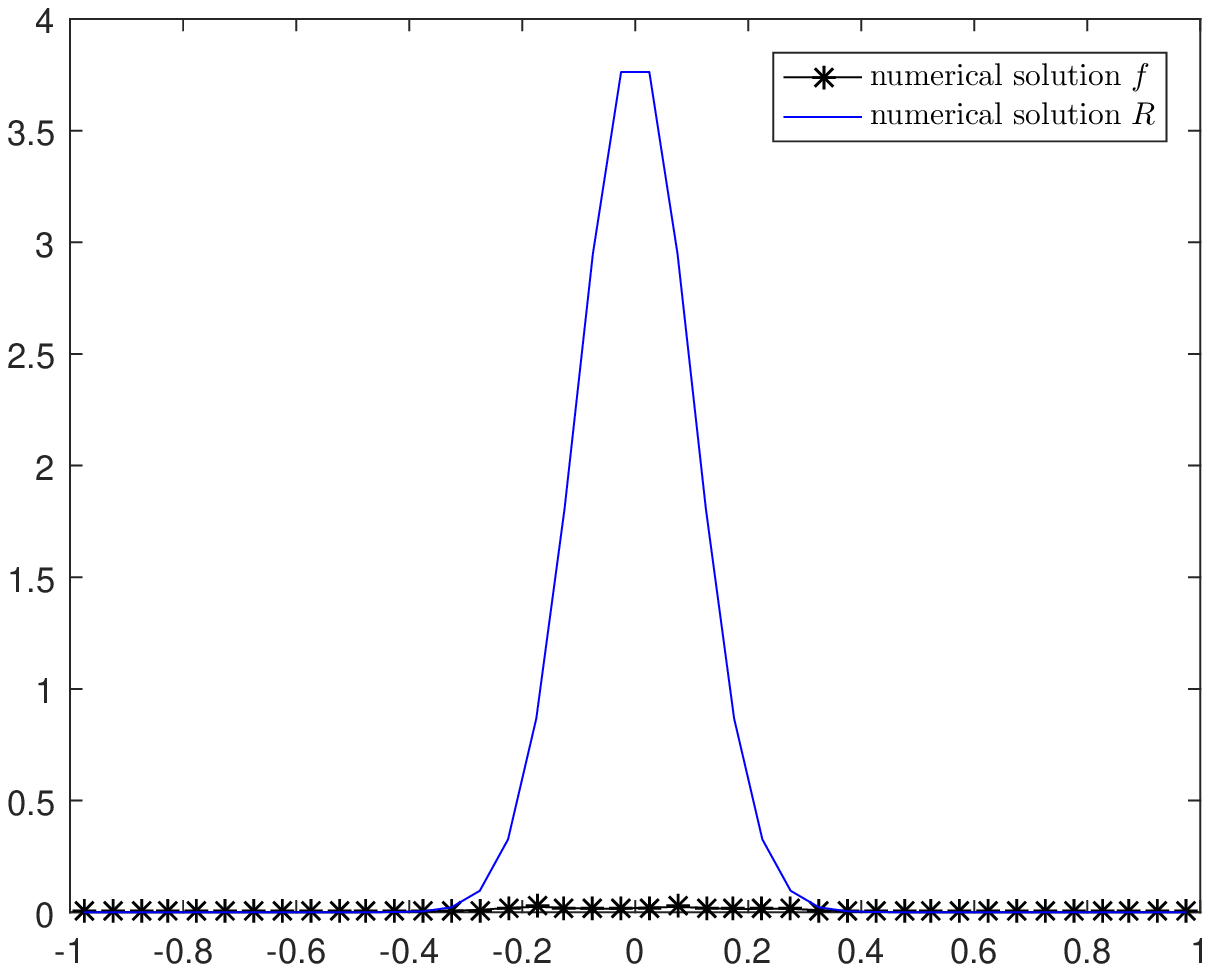}}
\end{center}
\begin{center}

\resizebox{5cm}{4cm}{\includegraphics{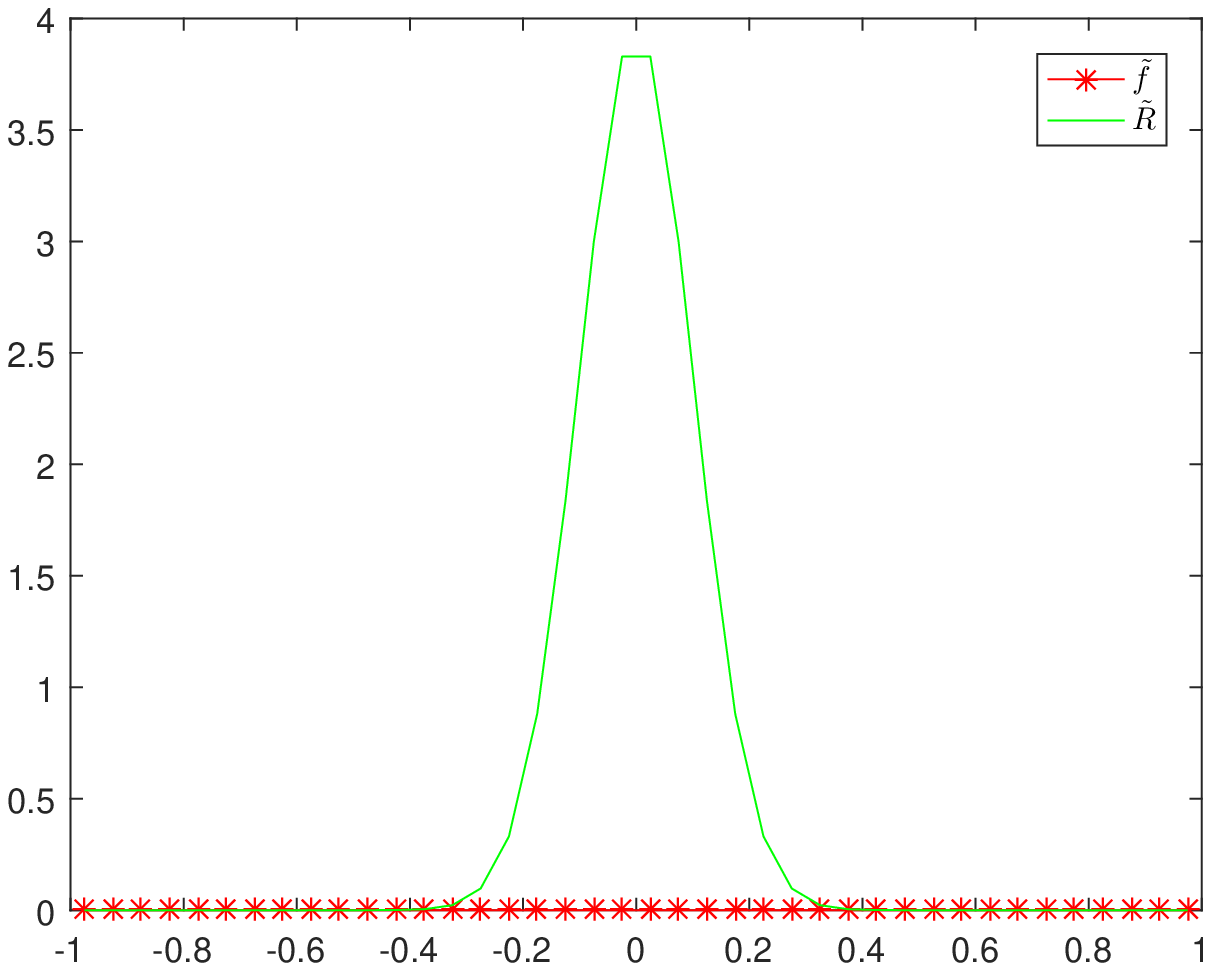}}\;
\resizebox{5cm}{4cm}{\includegraphics{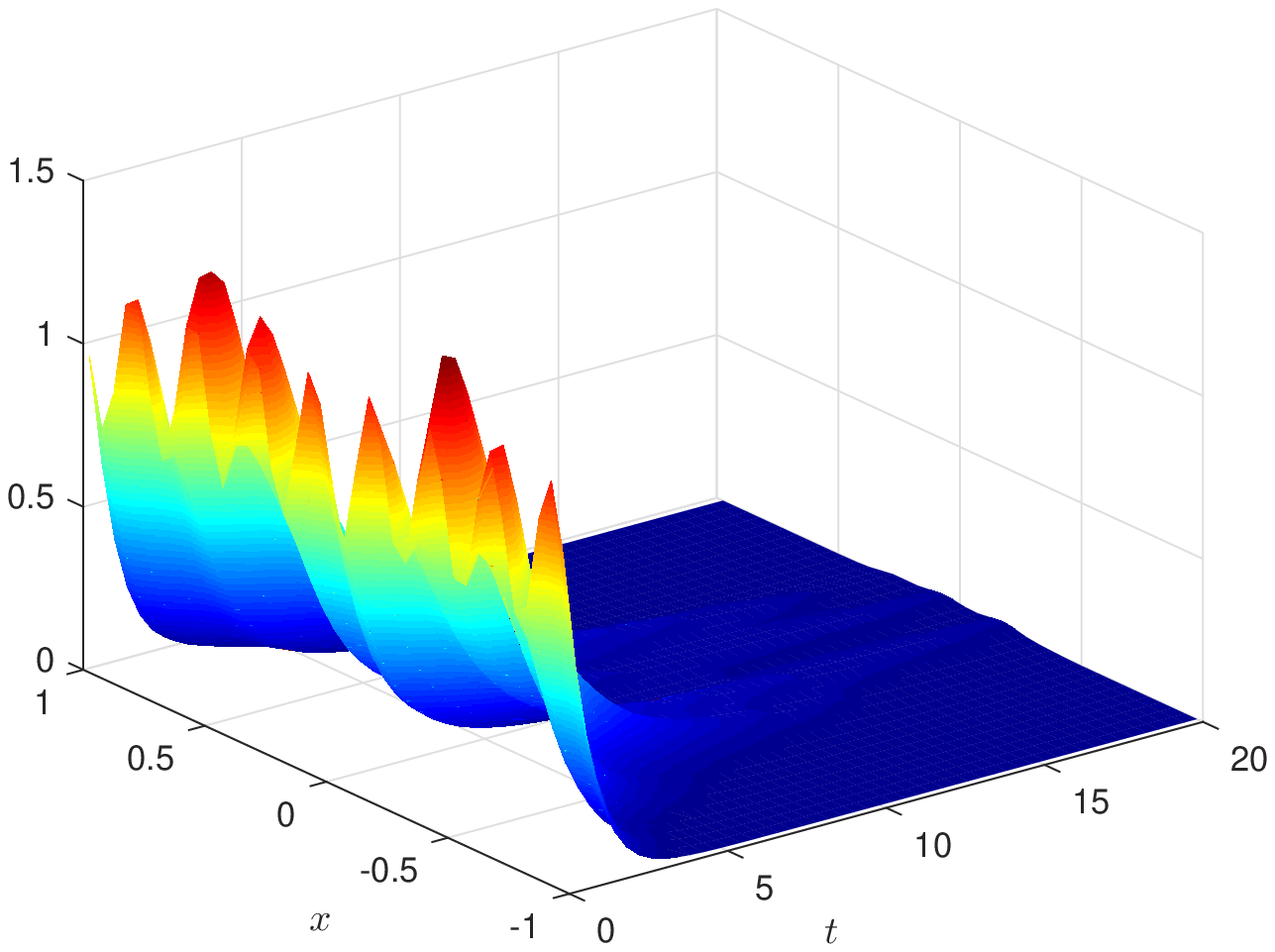}}
\begin{center}
\begin{minipage}[b]{15cm}
\begin{center}
\noindent\footnotesize \textbf{Fig.2}\ \ Numerical solutions of (\ref{ffs}) tend to the ESD, uniform meshes $N=40$ and $\Delta t =0.4$. The first line:
initial data $f^0(x)$ (left); $T=2$ (middle); $T = 20$ (right). The second line: ESD (left); The variation of $f$ with time, $T \in [0, 20]$ (right).
\end{center}
\end{minipage}
\end{center}
\end{center}

\section*{Acknowledgments}
Cai was supported partially by National Natural Science Foundation of China (No.: 11701557, 11701556), National foreign special projects (No.: BG20190001019), and the Fundamental Research Funds for the Central Universities (No.: 2020YQLX05). Liu was partially supported by the National Science Foundation under Grant DMS1812666.
\bigskip

\bibliographystyle{abbrv}
\makeatletter
\renewcommand\@biblabel[1]{#1.}

\end{document}